\newcommand{\al}{\alpha}
\newcommand{\bt}{\beta}
\newcommand{\dl}{\delta}
\newcommand{\Dl}{\Delta}
\newcommand{\lb}{\lambda}
\newcommand{\Lb}{\Lambda}
\newcommand{\vf}{\varphi}
\newcommand{\sg}{\sigma}
\newcommand{\bZ}{\mathbb{Z}}
\newcommand{\pa}{\partial}
\newcommand{\lra}{\longrightarrow}
\newcommand{\wt}{\widetilde}
\newcommand{\ol}{\overline}
\newcommand{\os}{\overset}
\newcommand{\sbs}{\subset}
\newcommand{\vnth}{\varnothing}
\newcommand{\rlm}{\underset{\longrightarrow}{\lim}\,}
\newcommand{\llm}{\underset{\longleftarrow}{\lim}\,}
\newcommand{\Hom}{\operatorname{Hom}}
\newcommand{\Ext}{\operatorname{Ext}}
\newcommand{\Ker}{\operatorname{Ker}}
\newcommand{\Coker}{\operatorname{Coker}}
\newcommand{\IIm}{\operatorname{Im}}
\theoremstyle{dgthm}
\newtheorem{theorem}{Theorem}
\newtheorem{corollary}{Corollary}
\newtheorem{lemma}{Lemma}
\theoremstyle{dgdef}
\newtheorem{definition}{Definition}
\newtheorem{remark}{Remark}
\begin{document}


\title{The Tautness Property of Homology Theory}

\author{Anzor Beridze and Leonard Mdzinarishvili }

\address{Department of Mathematics,
Faculty of Exact Sciences and Education,
Batumi Shota Rustaveli State University,
35, Ninoshvili St., Batumi,
Georgia; \\
School of Mathematics, Kutaisi International University, Youth Avenue, 5th Lane, Kutaisi 4600, Georgia\\
e-mail:~a.beridze@bsu.edu.ge~~~anzor.beridze@kiu.edu.ge
}

\address{Department of Mathematics,
Faculty of Informatics and Control Systems,
Georgian Technical University,
77, Kostava St., Tbilisi,
Georgia; e-mail:~l.mdzinarishvili@gtu.ge}


\begin{abstract} {\color{black} The tautness for a cohomology theory is formulated and studied by various authors. However, the analogous property is not considered for a homology theory. In this paper, we will define and study this very property for the Massey homology theory. Moreover, we will prove that the Kolmogoroff and {\color{black}the} Massey homologies are isomorphic on the category of locally compact, paracompact spaces and proper maps. Therefore, we will obtain the same result for the Kolmogoroff homology theory.}
\end{abstract}	
	

\begin{keyword}
Functional Space, Finite Exact Sequence, Massey homology, Kolmogoroff Homology, Steenrod Homology, Milnor Homology
\MSC 55N10
\end{keyword}

\maketitle
\paragraph*{\bf Introduction}
{\color{black} Let $A$ be a closed} subspace of a topological space $X$ and $\{U\}$ {\color{black}be a} system of neighborhoods $U$ of $A$, directed by inclusion. Then for each cohomology theory $h^*$ there is a natural homomorphism 
$$
i^*: \rlm h^*(U) \lra h^*(A). \eqno(*)
$$
It is said that $A$ is tautly embedded in {\color{black}the} space $X$ if {\color{black}the} homomorphism $i^*$ is an isomorphism \cite[\S 6.1]{11}. The Alexander-Spanier cohomology  on the category of paracompact Hausdorff spaces and continuous maps \cite[Theorem {\color{black}2 \S 6.6}]{11}  and the Massey cohomology $H_c^*$ on the category of  locally compact Hausdorff spaces and proper maps  \cite[Theorem 6.4, \S 6.4]{9} are the examples of cohomolog{\color{black}ies}  for which {\color{black}any closed} subspace $A$ is tautly embedded in $X$ . {\color{black}  It is natural to ask whether or not the analogous property fulfills for an exact homology theory as well. Therefore, the aim is to investigate a natural homomorphism  
$$
i_*:h_*(A) \lra \llm h_*(U) \eqno(**)
$$
for a homology theory.}
In this paper, it is proved that for the Massey homology $\os{{\,}_M}{H}_*$, there exists an infinite exact sequence on the category of locally compact Hausdorff spaces $X$, which includes the homomophism $i_*$. In particular, we have the following main theorem.\\

{\color{black}
{\bf Theorem 2.}
	The system $\{N\}$ of closed neighborhoods $N$ of closed subspace $A$ of a locally compact Hausdorff space $X$, directed by inclusion, induces the following exact sequence
	\begin{gather*}
	\cdots \lra \llm^{(2k+1)} \os{{\,}_M}{H}_{n+k+1}(N) \lra \cdots \lra \llm^{(3)} \os{{\,}_M}{H}_{n+2}(N) \lra \llm^{(1)} \os{{\,}_M}{H}_{n+1}(N) \lra \\
	\lra \os{{\,}_M}{H}_{n}(A,G) \os{i_n}{\lra} \llm \os{{\,}_M}{H}_{n}(N) \lra  \llm^{(2)} \os{{\,}_M}{H}_{n+1}(N) \lra \cdots \lra \llm^{(2k)} \os{{\,}_M}{H}_{n+k}(N) \lra \cdots\,,
	\end{gather*}
	where $\os{{\,}_M}{H}_*(N) =\os{{\,}_M}{H}_*(N,G)$ is the  Massey  homology \cite[\S 4.6]{9} of closed neighborhood $N$ with coefficient in an abelian group $G$.}

{\color{black} It is natural to study the same property for other exact homology theories \cite{7}, \cite{12}, \cite{3}, \cite{10}. Consequently, in the second part of the paper, it is proved that the Kolmogoroff \cite{7,2} and {\color{black}the} Massey \citep{9} homologies are isomorphic on the category of locally compact, paracompact spaces and proper maps. Using the obtained result, we will show that for the Kolmogoroff \cite{7}, the Milnor \cite{10}  and the Steenrod \cite{12} homology theories the following properties hold:
	
{\bf Corollary 5.}
	{\rm a)} If $X$ is a locally compact, paracompact Hausdorff space, then for the system $\{N\}$ of closed neighborhoods $N$ of a closed subspace $A$ of  $X$, there is an infinite exact sequence
	\begin{gather*}
	\cdots \lra \llm^{(2k+1)} \os{{\,}_K}{H}_{n+k+1}(N) \lra \cdots \lra \llm^{(3)} \os{{\,}_K}{H}_{n+2}(N) \lra \llm^{(1)} \os{{\,}_K}{H}_{n+1}(N) 
	\lra \os{{\,}_K}{H}_{n}(A,G) \nonumber \lra \\
	\os{i_n}{\lra} \llm\os{{\,}_K}{H}_{n}(N) \lra \llm^{(2)} \os{{\,}_K}{H}_{n+1}(N) \lra \cdots \lra \llm^{(2k)} \os{{\,}_K}{H}_{n+k}(N) \lra \cdots\,,
	\end{gather*}
	where $\os{{\,}_K}{H}_*(N)=\os{{\,}_K}{H}_*(N,G)$ is the Kolmogoroff homology.
	
	{\rm b)} If  $X$ is a compact Hausdorff space, then for the system $\{N\}$ of closed neighborhoods $N$ of a closed subspace $A$ of $X$, there is an infinite exact sequence
	\begin{gather*}
	\cdots \lra \llm^{(2k+1)} \os{{\,}_{Mi}}{H}_{n+k+1}(N) \lra \cdots \lra \llm^{(1)} \os{{\,}_{Mi}}{H}_{n+1}(N) \lra \os{{\,}_{Mi}}{H}_{n}(A) \lra\\
	\os{i_n}{\lra} \llm\os{{\,}_{Mi}}{H}_{n}(N) \lra \llm^{(2)} \os{{\,}_{Mi}}{H}_{n+1}(N) \lra \cdots \lra \llm^{(2k)} \os{{\,}_{Mi}}{H}_{n+k}(N) \lra \cdots\,,
	\end{gather*}
	where $\os{{\,}_{Mi}}{H}_*(N)=\os{{\,}_{Mi}}{H}_*(N,G)$ is the Milnor homology \cite{10}.\\

{\bf Corollary 6.}
	{\rm a)} If $X$ is a locally compact {\color{black}Hausdorff} space with second countable axiom, then for each countable system $\{N_i\}$ of closed neighborhoods of a closed subspace  $A$ of  $X$ there is a short exact sequence 
	$$
	0 \lra \llm^{(1)} \os{{\,}_M}{H}_{n+1} (N_i) \lra \os{{\,}_M}{H}_n(A,G) \lra \llm \os{{\,}_M}{H}_n(N_i) \lra 0,
	$$
	where $\os{{\,}_M}{H}_*$ is the Massey homology \cite{9}.
	
	{\rm b)} If $X$ is a locally compact, paracompact Hausdorff space with second countable axiom, then for each countable system $\{N_i\}$ of closed neighborhoods of a closed subspace $A$ of $X$ there is a short exact sequence 
	$$
	0 \lra \llm^{(1)} \os{{\,}_K}{H}_{n+1} (N_i) \lra \os{{\,}_K}{H}_n(A,G) \lra \llm \os{{\,}_K}{H}_n(N_i) \lra 0,
	$$
	where $\os{{\,}_K}{H}_*$ is the Kolmogoroff homology \cite{7}.
	
	{\rm c)} If $X$ is a compact Hausdorff space with second countable axiom, then for each countable system $\{N_i\}$ of closed neighborhoods of a closed subspace $A$ of $X$ there is a short exact sequence
	$$
	0 \lra \llm^{(1)} \os{{\,}_{Mi}}{H}_{n+1} (N_i) \lra \os{{\,}_{Mi}}{H}_n(A,G) \lra \llm \os{{\,}_{Mi}}{H}_n(N_i) \lra 0,
	$$
	where $\os{{\,}_{Mi}}{H}_*$ is the Milnor homology \cite{10}.
	
	{\rm d)} If $X$ is a compact metric space, then for each countable system $\{N_i\}$ of a closed neighborhoods  of closed subspace $A$ of $X$ there is a short exact sequence
	$$
	0 \lra \llm^{(1)} \os{st}{H}_{n+1} (N_i) \lra \os{st}{H}_n(A,G) \lra \llm \os{st}{H}_n(N_i) \lra 0,
	$$
	where $\os{st}{H}_*$ is the  Steenrod homology \cite{12}.
}

\paragraph*{\bf 1 Tautness} In the paper \cite[\S1.1]{9}  W. Massey defined {\color{black}the} cochain complex $C_c^*(X,G)$ for any locally compact Hausdorff spaces $X$ and any abelian group  $G$. By {\color{black}Theorem 4.1 \citep[\S 4.4]{9},} for each locally compact Hausdorff space $X$ and each integer $n$ the cochain group $C_c^n(X,\bZ)$ with integer coefficient is a free abelian group. The chain complex $C_*(X,G)=\Hom (C_c^*(X),G)$ is completely defined by {\color{black}the cochain complex} $C_c^*(X)$ with coefficient group $\mathbb{Z}$ of integers and therefore, by {\color{black}Theorem 4.1} \citep[\S 4.4]{9} and {\color{black}Theorem 4.1 (Universal Coefficients) \citep[\S III.4]{8}, there is {\color{black}an} exact sequence \citep[Corollary 4.18, \S 4.8]{9}}

\begin{equation}\label{eq1}
0 \lra \Ext(H_c^{n+1}(X),G) \lra \os{{\,}_M}{H}_n(X,G) \lra \Hom(H_c^n(X),G) \lra 0,
\end{equation}
where $\os{{\,}_M}{H}_n(X,G)$ is {\color{black}the Massey homology group and ${H}^{n+1}_c(X,G)$ is the Massey cohomology group, respectively \cite[\S 4.6]{9}, i.e $\os{{\,}_M}{H}_n(X,G)=H_n(\Hom(C_c^*(X),G))$ and ${H}^{n+1}_c(X)=H_n(C_c^*(X,\mathbb{Z})$. Moreover, this sequence is split. However, the splitting is only natural with respect to coefficient homomorphisms.}

Let $X$ be a locally compact space and $A$ be a closed subspace of $X$. In this case, for each closed neighborhood $N$ of $A$  there is a homomorphism $i_N:h^n(N)\to h^n(A)$. If $N_1\subset N_2$, then there is a homomorphism $i_{N_1,N_2}:h^n(N_2)\to h^n(N_1)$. Therefore, {\color{black} there is the direct system $\{h^n(N)\}$ of abelian groups and homomorphisms $\{i_{N_1,N_2}\}$.} Consequently, there exists a natural homomorphism
$$
i^n:\rlm h^n(N)\lra h^n(A).
$$
If $h^*=H_c^*$ is the Massey cohomology {\color{black} \cite[\S 4.6]{9}, then (see Theorem 6.4 \cite[\S 6.4]{9})} there is an isomorphism
\begin{equation}\label{eq12}
i^n:\rlm H_c^n(N,G) \os{\sim}{\lra} H_c^n(A,G).
\end{equation}
In this case, a subspace $A$ is said to be 	taut with respect to cohomology theory $H^*_c(-,G)$.

Let $h_*$ be a homology theory on the category of some topological spaces. Let $A$ be a closed subspace of $X$. In this case, for a neighborhood $N$ of $A$  there is a homomorphism $i_N:h_n(A)\to h_n(N)$. If $N_1\subset N_2$, then there is a homomorphism $i_{N_1,N_2}:h_n(N_1)\to h_n(N_2)$. Therefore, {\color{black} there is the inverse system $\{h_n(N)\}$ of abelian groups and homomorphisms $\{i_{N_1,N_2}\}$.} Consequently, there exists a natural homomorphism
$$
i_n: h_n(A) \os{}{\lra} \llm h_n(N) .
$$
{\color{black}
\begin{definition}
	A closed subspace $A$ of a space $X$ is said to be tautly embedded in $X$, if for some set $N$ of neighborhoods there exists a long exact sequence 
	\begin{gather*}
    \cdots \lra \llm^{(2k+1)} h_{n+k+1}(N) \lra \cdots \lra \llm^{(3)} h_{n+2}(N) \lra \llm^{(1)} h_{n+1}(N) \lra \\
     \lra h_{n}(A,G) \os{i_n}{\lra} \llm h_{n}(N) \lra  \llm^{(2)} h_{n+1}(N) \lra \cdots \lra \llm^{(2k)} h_{n+k}(N) \lra \cdots\ ,
     \end{gather*}
	which contains the homomorphism  $h_n(A) \os{i_n}{\lra}\llm h_n(N)$. 
\end{definition}
}
Let $\os{{\,}_M}{H}_n(X,G) =H_n(\Hom(C_c^*(X),G))$ be the Massey homology group of locally compact Hausdorff spaces. Let $A$ be a closed subspace of $X$ and $N$ be the set of all closed neighborhoods of $A$. Then each homomorphism $i_{N_1,N_2}:N_1\to N_2$ is a proper map (a map is proper if it is continuous and if inverse image of any compact subspace is compact) and induces a homomorphism $i_{N_1,N_2}:\os{{\,}_M}{H}_n(N_1)\to \os{{\,}_M}{H}_n(N_2),$
which defines the homomorphism 
$$
i_*:\os{{\,}_M}{H}_n(A,G) \lra \llm \os{{\,}_M}{H}_n(N,G).
$$
Since the short exact sequence \eqref{eq1} is natural, there is a commutative diagram 
\begin{gather}
\xymatrix{
	0 \ar[r] & \Ext(H_c^{n+1}(A),G) \ar[r] \ar[d]^-{\rho'} & \os{{\,}_M}{H}_n(A,G) \ar[r] \ar[d]^-{i_n} & \Hom(H_c^n(A),G) \ar[r] \ar[d]^-{\rho''} & 0 \\
	0 \ar[r] & \llm \Ext(H_c^{n+1}(N),G) \ar[r] & \llm \os{{\,}_M}{H}_n(N,G) \ar[r] & \llm \Hom(H_c^n(N),G) \ar[r]  & ~~ & \
} \nonumber \\
 \lra \llm^{(1)} \Ext(H_c^{n+1}(N),G) \lra \llm^{(1)} \os{{\,}_M}{H}_n(N,G) \lra \llm^{(1)} \Hom(H_c^n(N),G) \lra \cdots  \label{eq13}
\end{gather}
with exact arrows.

Using the isomorphism \eqref{eq12} and properties of functors $\Hom(-,G)$ and  $\rlm$, there is an isomorphism 
\begin{equation}\label{eq14}
\Hom(H_c^n(A),G)\approx \Hom (\rlm H_c^n(N),G) \approx \llm \Hom(H_c^n(N),G).
\end{equation}
Therefore, a homomorphism $\rho''$ is an isomorphism.

Using the isomorphism \eqref{eq14} and the commutative diagram \eqref{eq13}, we obtain the following commutative diagram
\begin{equation}\label{eq15}
\xymatrix{
	& 0 \ar[d] & 0 \ar[d] \\
	& \Ker \rho' \ar[r]^-{\sim} \ar[d] & \Ker i_n \ar[d] \\
	0 \ar[r] & \Ext(H_c^{n+1}(A),G) \ar[r] \ar[d]^-{\rho '} & \os{{\,}_M}{H}_n(A,G) \ar[r] \ar[d]^-{i_n}
	& \Hom(H_c^n(A),G) \ar[r] \ar[d]^-{\overset{~~}{\simeq}}
	& 0 \\
	0 \ar[r] & \llm \Ext(H_c^{n+1}(N),G) \ar[r] \ar[d] & \llm \os{{\,}_M}{H}_n(N,G) \ar[r] \ar[d]
	& \llm \Hom(H_c^n(N),G) \ar[r]  & 0 \\
	& \Coker \rho' \ar[r]^-{\sim} \ar[d] & \Coker i_n \ar[d] \\
	& 0 & 0
}
\end{equation}

By {\color{black}Lemma 1 \cite{11n},} if a complex $C_*$ is free, then there is an exact sequence 
$$
0 \lra \Hom(B_{n-1},G) \lra Z^n \lra \Hom(H_n,G) \lra 0,
$$
where {\color{black}$B_{n-1}=\text{Im}~ \partial_{n},~~ \partial:C_{n} \to C_{n-1}$ and  $Z^n=\Ker \dl^{n+1}$, $\dl^{n+1}:C^n \to C^{n+1}$, where $C^*=Hom(C_*,G)$. In our case, we have a dual version. In particular,} the cochain complex $C^*_c(-)$ is free and hence, there is an exact sequence
$$
0 \lra \Hom(B_c^{n+1}(-),G) \lra Z_n \lra \Hom(H_c^n(-),G) \lra 0,
$$
where {\color{black}$Z_n=\Ker \partial_{n}$,  $\partial_{n}:C_n \to C_{n-1}$ and $B_c^{n+1} =\IIm \dl^n$, $\dl^n:C_c^{n} \to C_c^{n+1}$, where $C_*=Hom(C^*_c,G)$}. Consequently, using {\color{black}Lemma 2 \citep[]{11n},} for each $A \subset N $ there is a commutative diagram with exact arrows
\begin{equation}\label{eq16}
\xymatrix{
	0 \ar[r] & \Hom(B_c^{n+1}(N),G) \ar[r]\ar[d] & Z_n \ar[r]\ar[d] & \Hom (H_c^n(N),G) \ar[r]\ar@{=}[d] & 0 \\
	0 \ar[r] & \Ext(H_c^{n+1}(N),G) \ar[r] & H_n(N,G) \ar[r] & \Hom(H_c^n(N),G) \ar[r] & 0\,.
}
\end{equation}

\begin{theorem}\label{thm4}
	Let $\{C_c^*(N)\}$ be a direct system of free chain complexes $C_c^*(N)$ of closed neighborhoods $N$ of a closed subspace $A$ of locally compact Hausdorff spaces $X$ and $G$ be an abelian group. In this case, for each $n\in \bZ$, and $i\geq 1$ there is a short exact sequence 
	\begin{equation}\label{eq17}
	0 \lra \llm^{(i)} \Ext(H_c^{n+1}(N),G) \lra \llm^{(i)} \os{{\,}_M}{H}_n(N,G) \lra \llm^{(i)} \Hom(H_c^*(N),G) \lra 0,
	\end{equation}
	which splits for $i\geq 2$.
\end{theorem}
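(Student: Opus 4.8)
My plan is to obtain \eqref{eq17} by applying the derived functors of the inverse limit to the short exact sequence \eqref{eq1}, read now as a short exact sequence of inverse systems indexed by the directed set $\{N\}$ of closed neighborhoods of $A$. Put $\mathcal{E}=\{\Ext(H_c^{n+1}(N),G)\}$, $\mathcal{H}=\{\os{{\,}_M}{H}_n(N,G)\}$ and $\mathcal{Q}=\{\Hom(H_c^n(N),G)\}$; since \eqref{eq1} is natural in the space, $0\lra\mathcal{E}\lra\mathcal{H}\lra\mathcal{Q}\lra0$ is a short exact sequence of inverse systems, and applying $\llm$ produces the long exact sequence
$$\cdots \lra \llm^{(i)}\mathcal{E} \lra \llm^{(i)}\mathcal{H} \lra \llm^{(i)}\mathcal{Q} \os{\dl_i}{\lra} \llm^{(i+1)}\mathcal{E} \lra \cdots .$$
Hence \eqref{eq17} is exact for every $i\geq1$ as soon as all the connecting homomorphisms $\dl_i$, $i\geq0$, are shown to vanish, and the splitting for $i\geq2$ will be extracted from the same picture.

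The vanishing of $\dl_0$ is essentially recorded already in \eqref{eq15}: because $\rho''$ is an isomorphism by \eqref{eq14} and $\os{{\,}_M}{H}_n(A,G)\to\Hom(H_c^n(A),G)$ is onto, the composite $\os{{\,}_M}{H}_n(A,G)\os{i_n}{\lra}\llm\mathcal{H}\lra\llm\mathcal{Q}$ is onto, hence $\llm\mathcal{H}\to\llm\mathcal{Q}$ is onto, i.e. $\dl_0=0$. For $i\geq1$ I would use the commutative diagram \eqref{eq16}, whose vertical maps constitute a morphism of short exact sequences of inverse systems from
$$0 \lra \{\Hom(B_c^{n+1}(N),G)\} \lra \{Z_n(N)\} \lra \mathcal{Q} \lra 0$$
to $0\lra\mathcal{E}\lra\mathcal{H}\lra\mathcal{Q}\lra0$, equal to the identity on the common quotient term $\mathcal{Q}$. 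By naturality of the connecting map, $\dl_i$ then factors through the connecting homomorphism $\llm^{(i)}\mathcal{Q}\to\llm^{(i+1)}\{\Hom(B_c^{n+1}(N),G)\}$ of the upper sequence. Now every $B_c^{n+1}(N)=\IIm(\dl^n: C_c^n(N)\to C_c^{n+1}(N))$ is a subgroup of the free abelian group $C_c^{n+1}(N)$, hence itself free, so $\{B_c^{n+1}(N)\}$ is a direct system of free abelian groups over a directed set; consequently $\llm^{(p)}$ of its $\Hom(-,G)$-dual can be identified with $\Ext^p$ of the abelian group $\rlm B_c^{n+1}(N)$ (the direct limit over $N$) — one resolves the direct limit by the canonical complex of free abelian groups attached to the direct system, which is a resolution precisely because the index set is directed, and applies $\Hom(-,G)$. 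Since $\Ext^p(-,G)=0$ for $p\geq2$, this gives $\llm^{(i+1)}\{\Hom(B_c^{n+1}(N),G)\}=0$ for all $i\geq1$, whence $\dl_i=0$ for $i\geq1$; together with $\dl_0=0$ this yields the exactness of \eqref{eq17} for every $i\geq1$.

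For the splitting when $i\geq2$ I would use the upper sequence of \eqref{eq16} once more. Let $j:\{Z_n(N)\}\to\mathcal{H}$ be the map $Z_n(N)\to Z_n(N)/B_n(N)=\os{{\,}_M}{H}_n(N,G)$, let $\pi:\{Z_n(N)\}\to\mathcal{Q}$ be the map appearing in the upper sequence, and let $q:\mathcal{H}\to\mathcal{Q}$ be the map of \eqref{eq1}; commutativity of the right-hand square of \eqref{eq16} gives $q\circ j=\pi$. Because $\llm^{(p)}\{\Hom(B_c^{n+1}(N),G)\}=0$ for $p\geq2$, the long exact sequence of the upper sequence shows that $\pi_*:\llm^{(i)}\{Z_n(N)\}\to\llm^{(i)}\mathcal{Q}$ is an isomorphism for every $i\geq2$. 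Then $s:=j_*\circ\pi_*^{-1}:\llm^{(i)}\mathcal{Q}\to\llm^{(i)}\mathcal{H}$ satisfies $q_*\circ s=\pi_*\circ\pi_*^{-1}=\mathrm{id}$, so it is a section of $\llm^{(i)}\mathcal{H}\to\llm^{(i)}\mathcal{Q}$ and \eqref{eq17} splits for $i\geq2$.

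The step I expect to be the main obstacle is the homological-algebra input $\llm^{(p)}\{\Hom(B_c^{n+1}(N),G)\}\cong\Ext^p(\rlm B_c^{n+1}(N),G)$ for the $\Hom(-,G)$-dual of a free direct system over a directed index set (which yields the vanishing for $p\geq2$), together with the bookkeeping needed to check that the diagram \eqref{eq16} and all the maps above are genuinely natural in $N$, so that the factorizations really take place in the category of inverse systems. I note that this route uses only the tautness of the Massey \emph{cohomology} (through the isomorphism $\rho''$ of \eqref{eq14}--\eqref{eq15}) and the freeness of the cochain complexes; everything else — the derived limit long exact sequence, the reduction to $\dl_i=0$, the treatment of $\dl_0$, and the construction of $s$ — is then formal.
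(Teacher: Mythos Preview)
Your proposal is correct and follows essentially the same route as the paper: both apply the derived-limit long exact sequence to the UCT sequence \eqref{eq1}, compare it via the morphism of short exact sequences \eqref{eq16} to the upper sequence, use the vanishing $\llm^{(p)}\Hom(B_c^{n+1}(N),G)=0$ for $p\geq2$ (the paper cites this from Huber--Meier \cite{6n} through \eqref{eq19}--\eqref{eq20}, while you give the equivalent Roos-complex identification with $\Ext^p(\rlm B_c^{n+1}(N),G)$), and construct the section for $i\geq2$ from the resulting isomorphism $\llm^{(i)}Z_n\cong\llm^{(i)}\mathcal{Q}$. The only organizational difference is that you treat the case $i=1$ (i.e.\ $\dl_0=0$) here via tautness, whereas the paper's proof of Theorem~\ref{thm4} explicitly records the short exact sequence only for $i\geq2$ and defers the $i=1$ case to the proof of Theorem~\ref{thm5}.
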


\begin{proof} Using the split sequence \eqref{eq1} and commutative diagram \eqref{eq16}, we obtain the following commutative diagram with the exact arrows 
	\begin{equation}\label{eq18} \fontsize{10}{12pt}\selectfont 
	\xymatrix{
		\cdots \ar[r] & \llm^{(i)}\Hom(B_c^{n+1}(N),G) \ar[r]\ar[d] & \llm^{(i)} Z_n \ar[r]\ar[d] & \llm^{(i)} \Hom (H_c^n(N),G) \ar[r]\ar@{=}[d] & \cdots \\
		\cdots \ar[r] & \llm^{(i)}\Ext(H_c^{n+1}(N),G) \ar[r] & \llm^{(i)} \os{{\,}_M}{H}_n(N,G) \ar[r] & \llm^{(i)}\Hom(H_c^n(N),G) \ar[r] & \cdots\,.
	}
	\end{equation}
	In the paper \cite{6n} it is shown that for each direct system  $\{A_\al\}$ of abelian groups $A_\al$ there exists {\color{black}a} short exact sequence
	\begin{gather}
	0 \lra \llm^{(1)} \Hom(A_\al,G) \lra \Ext(\rlm A_\al,G) \lra \llm \Ext(A_\al,G) \nonumber \lra \\
	\lra \llm^{(2)} \Hom(A_\al,G) \lra 0,
	\label{eq19}
	\end{gather}
	and for each $i\geq 1$ there is an isomorphism 
	\begin{equation}\label{eq20}
	\llm^{(i)} \Ext (A_\al,G) \approx \llm^{(i+2)} \Hom(A_\al,G).
	\end{equation}
	
	Consider a direct system $\{B_c^{n+1}(N)\}$ of free groups $B_c^{n+1}(N)$. In this case, by the exact sequence  \eqref{eq19} and the isomorphism  \eqref{eq20} we have	
	\begin{equation}\label{eq21}
	\llm^{(i)} \Hom(B_c^{n+1}(N),G)=0 \quad \text{for} \;\; i\geq 2.
	\end{equation}
	By the diagram \eqref{eq18} and the equality \eqref{eq21} we have 
	\begin{itemize}
		\item[a)] an isomorphism $\llm^{(i)} Z_n \approx \llm^{(i)} \Hom(H_c^n(N),G)$ for each  $i\geq 2$;
		
		\item[b)] an epimorphism  $\llm^{(i)} \os{{\,}_M}{H}_n(N,G) \lra \llm^{(i)} \Hom(H_c^n(N),G)$ for each $i\geq 1$;
		
		\item[c)] a monomorphism $\llm^{(i)} \Ext(H_c^{n+1}(N),G) \lra \llm^{(i)} \os{{\,}_M}{H}_n(N,G)$ for each $i\geq 2$;
		
		\item[d)] the trivial homomorphism $\llm^{(i)} \Hom(H_c^n(N),G) \lra \llm^{(i+1)} \Ext(H_c^{n+1}(N),G)$ for each \mbox{$i\geq 1$.}
	\end{itemize}
	
	By  b) and  c) for each $i\geq 2$ we have a short exact sequence
	\begin{equation}\label{eq22}
	0 \lra \llm^{(i)} \Ext(H_c^{n+1}(N),G) \lra \llm^{(i)} \os{{\,}_M}{H}_n(N,G) \lra \llm^{(i)} \Hom(H_c^n(N),G) \lra 0.
	\end{equation}
	{\color{black}On the other hand, by a)} for each $i\geq 2$ we can define a homomorphism 
	$$
	\llm^{(i)} \Hom(H_c^n(N),G) \os{\sim}{\lra} \llm^{(i)} Z_n \lra \llm^{(i)} \Hom(H_c^n(N),G).
	$$
	It is clear that the composition 
	$$
	\llm^{(i)} \Hom(H_c^n(N),G) \os{\sim}{\lra} \llm^{(i)} Z_n \lra \llm^{(i)} \os{{\,}_M}{H}_n(N,G) \lra \llm^{(i)}  \Hom(H_c^n(N),G)
	$$
	is the identity map. Therefore, for each $i\geq 2$ the sequence \eqref{eq22} splits. 
\end{proof}

\begin{theorem}\label{thm5}
	The system $\{N\}$ of closed neighborhoods $N$ of closed subspace $A$ of a locally compact Hausdorff space $X$, directed by inclusion, induces the following exact sequence
	\begin{gather*}
	\cdots \lra \llm^{(2k+1)} \os{{\,}_M}{H}_{n+k+1}(N) \lra \cdots \lra \llm^{(3)} \os{{\,}_M}{H}_{n+2}(N) \lra \llm^{(1)} \os{{\,}_M}{H}_{n+1}(N) \lra\\
	\lra \os{{\,}_M}{H}_{n}(A) \os{i_n}{\lra} \llm \os{{\,}_M}{H}_{n}(N) \lra  \llm^{(2)} \os{{\,}_M}{H}_{n+1}(N) \lra \cdots \lra \llm^{(2k)} \os{{\,}_M}{H}_{n+k}(N) \lra \cdots\,,
	\end{gather*}
	where $\os{{\,}_M}{H}_*(-) =\os{{\,}_M}{H}_*(-,G)$ is {\color{black}the Massey homology} with a coefficient abelian group $G$.
\end{theorem}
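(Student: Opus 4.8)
The plan is to assemble the asserted two-sided infinite exact sequence by splicing three ingredients that are already in place: the identification of $\Ker i_n$ and $\Coker i_n$ read off from diagram~\eqref{eq15}, the four-term exact sequence~\eqref{eq19} together with the shift isomorphism~\eqref{eq20} applied to the direct system of cohomology groups of the neighborhoods, and the short exact sequences~\eqref{eq17} of Theorem~\ref{thm4}, which split for derived index $\ge 2$. Besides~\eqref{eq17} for $i\ge 1$ I shall also use its $i=0$ instance $0\to\llm\Ext(H_c^{m+1}(N),G)\to\llm\os{{\,}_M}{H}_m(N,G)\to\llm\Hom(H_c^m(N),G)\to 0$, which is the bottom row of diagram~\eqref{eq15}, together with the consequence $\llm^{(i)}\Ext(H_c^{m+1}(N),G)\cong\llm^{(i+2)}\Hom(H_c^{m+1}(N),G)$ of~\eqref{eq20}, valid for $i\ge 1$.

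First I would determine the two ``seeds'' of the sequence. Since the tautness isomorphism~\eqref{eq12} gives $\rlm H_c^{n+1}(N,G)\cong H_c^{n+1}(A,G)$, substituting the direct system $\{H_c^{n+1}(N)\}$ into~\eqref{eq19} yields an exact sequence
\begin{gather*}
0\lra\llm^{(1)}\Hom(H_c^{n+1}(N),G)\lra\Ext(H_c^{n+1}(A),G)\os{\rho'}{\lra}\\
\llm\Ext(H_c^{n+1}(N),G)\lra\llm^{(2)}\Hom(H_c^{n+1}(N),G)\lra 0,
\end{gather*}
in which $\rho'$ is exactly the left vertical arrow of diagram~\eqref{eq15}. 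Hence $\Ker\rho'\cong\llm^{(1)}\Hom(H_c^{n+1}(N),G)$ and $\Coker\rho'\cong\llm^{(2)}\Hom(H_c^{n+1}(N),G)$, and combining this with the isomorphisms $\Ker\rho'\cong\Ker i_n$ and $\Coker\rho'\cong\Coker i_n$ displayed in~\eqref{eq15} I obtain $\Ker i_n\cong\llm^{(1)}\Hom(H_c^{n+1}(N),G)$ and $\Coker i_n\cong\llm^{(2)}\Hom(H_c^{n+1}(N),G)$.

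Next I would grow the two tails out of these seeds. For the left tail, \eqref{eq17} with $i=1$ provides an epimorphism $\llm^{(1)}\os{{\,}_M}{H}_{n+1}(N)\lra\llm^{(1)}\Hom(H_c^{n+1}(N),G)$ whose kernel is $\llm^{(1)}\Ext(H_c^{n+2}(N),G)$; following it by the isomorphism onto $\Ker i_n$ and the inclusion $\Ker i_n\hookrightarrow\os{{\,}_M}{H}_n(A,G)$ produces an arrow $\llm^{(1)}\os{{\,}_M}{H}_{n+1}(N)\to\os{{\,}_M}{H}_n(A,G)$ with image $\Ker i_n$. For $k\ge 1$ one then composes the epimorphism $\llm^{(2k+1)}\os{{\,}_M}{H}_{n+k+1}(N)\lra\llm^{(2k+1)}\Hom(H_c^{n+k+1}(N),G)$ of~\eqref{eq17}, the isomorphism $\llm^{(2k+1)}\Hom(H_c^{n+k+1}(N),G)\cong\llm^{(2k-1)}\Ext(H_c^{n+k+1}(N),G)$ of~\eqref{eq20}, and the monomorphism $\llm^{(2k-1)}\Ext(H_c^{n+k+1}(N),G)\hookrightarrow\llm^{(2k-1)}\os{{\,}_M}{H}_{n+k}(N)$ of~\eqref{eq17}; this gives the left half $\cdots\to\llm^{(2k+1)}\os{{\,}_M}{H}_{n+k+1}(N)\to\cdots\to\llm^{(1)}\os{{\,}_M}{H}_{n+1}(N)\to\os{{\,}_M}{H}_n(A,G)$, exact at each node because the image of the incoming arrow and the kernel of the outgoing arrow both coincide with the $\Ext$-subgroup that~\eqref{eq17} attaches to that node; this half of the construction uses only~\eqref{eq17} and~\eqref{eq20}, no splitting. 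The right tail is grown dually, but now using the splitting of~\eqref{eq17} for even index: start from the epimorphism $\llm\os{{\,}_M}{H}_n(N)\lra\Coker i_n\cong\llm^{(2)}\Hom(H_c^{n+1}(N),G)$ followed by the section $\llm^{(2)}\Hom(H_c^{n+1}(N),G)\hookrightarrow\llm^{(2)}\os{{\,}_M}{H}_{n+1}(N)$, and for $k\ge 1$ compose the projection onto the $\Ext$-summand $\llm^{(2k)}\os{{\,}_M}{H}_{n+k}(N)\lra\llm^{(2k)}\Ext(H_c^{n+k+1}(N),G)$, the isomorphism~\eqref{eq20}, and the section $\llm^{(2k+2)}\Hom(H_c^{n+k+1}(N),G)\hookrightarrow\llm^{(2k+2)}\os{{\,}_M}{H}_{n+k+1}(N)$; this gives the right half $\llm\os{{\,}_M}{H}_n(N)\to\llm^{(2)}\os{{\,}_M}{H}_{n+1}(N)\to\cdots\to\llm^{(2k)}\os{{\,}_M}{H}_{n+k}(N)\to\cdots$, exact at $\llm\os{{\,}_M}{H}_n(N)$ because the kernel of its outgoing arrow is $\IIm i_n$, and exact at each later node because the image of the incoming arrow and the kernel of the outgoing arrow are both the $\Hom$-summand of that node.

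Concatenating the left tail, the map $\os{{\,}_M}{H}_n(A,G)\os{i_n}{\lra}\llm\os{{\,}_M}{H}_n(N)$, and the right tail then produces the full sequence, with exactness at $\os{{\,}_M}{H}_n(A,G)$ and at $\llm\os{{\,}_M}{H}_n(N)$ immediate from the fact that $\Ker i_n$ and $\Coker i_n$ are the seeds. I expect the only genuine difficulty to be bookkeeping rather than ideas: one must check that the epimorphism onto the $\Hom$-part and the monomorphism from the $\Ext$-part used at a given node really come from one and the same instance of~\eqref{eq17}, so that ``kernel of the epimorphism'' coincides with ``image of the monomorphism'', and one must fix a splitting of~\eqref{eq17} for every even index $\ge 2$ and use it consistently on both the source and the target of each arrow of the right tail. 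No appeal to the (non-natural) splitting of~\eqref{eq1} is required, because~\eqref{eq17} and the isomorphisms underlying~\eqref{eq19}--\eqref{eq20} were already obtained through the natural subgroup $Z_n$ of $\os{{\,}_M}{H}_n(N)$ in diagrams~\eqref{eq16} and~\eqref{eq18}.
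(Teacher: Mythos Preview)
Your proposal is correct and follows essentially the same route as the paper: both identify $\Ker i_n$ and $\Coker i_n$ via diagram~\eqref{eq15} together with the Huber--Meier sequence~\eqref{eq19} (this is the paper's four-term sequence~\eqref{eq23}), and then splice in the short exact sequences~\eqref{eq17} of Theorem~\ref{thm4} along the shift isomorphisms~\eqref{eq20} to grow the two tails, invoking the splitting of~\eqref{eq17} for $i\ge 2$ on the even side. Your write-up is in fact more explicit than the paper's about how the arrows are built and why image and kernel match at each node; the paper compresses this step into a single zigzag diagram.
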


\begin{proof}
	By the diagram \eqref{eq15} and the property d) from  Theorem \ref{thm4} we have the following exact sequence
	\begin{gather}
	0 \lra \llm \Ext(H_c^{n+1}(N),G) \lra \llm \os{{\,}_M}{H}_n(N,G) \lra \llm \Hom(H_c^n(N),G) \nonumber \lra \\
	\lra \llm^{(1)} \Ext(H_c^{n+1}(N),G) \lra \llm^{(1)} \os{{\,}_M}{H}_n(N,G) \lra \llm^{(1)} \Hom(H_c^n(N),G) \lra 0. \label{eq21-1}
	\end{gather}
	By the isomorphism \eqref{eq14} and exact commutative diagram \eqref{eq13}, we will obtain the following exact sequence
	\begin{equation}\label{eq22-1}
	0 \lra \llm^{(1)} \Ext(H_c^{n+1}(N),G) \lra \llm^{(1)} \os{{\,}_M}{H}_n(N) \lra \llm^{(1)} \Hom(\os{{\,}_M}{H}{}_c^n(N),G) \lra 0,
	\end{equation}
	where by Theorem \ref{thm4} the sequence \eqref{eq22-1} splits for each $i\ge2$.
	
	Note that, by (5) and (9), there is an exact sequence	\begin{gather}
	0 \lra \llm^{(1)} \Hom(H_c^{n+1}(N),G) \lra \os{{\,}_M}{H}_n(A,G) \lra \llm \os{{\,}_M}{H}_n(N,G) \nonumber \lra \\
	\lra \llm^{(2)} \Hom(H_c^{n+1}(N),G) \lra 0.
	\label{eq23}
	\end{gather}
	By the exact sequences  \eqref{eq17}, \eqref{eq22-1} and \eqref{eq23} for $i\geq 1$, an isomorphism \eqref{eq20} and Theorem \ref{thm4} we have the exact sequence
	
	\[ 
	\xymatrix{
		&& \vdots \ar@{-->}[d] \\
		0 \ar[r] &\llm^{(3)} \Ext (H_c^{n+3}(N),G) \ar[r] & \llm^{(3)} \os{{\,}_M}{H}_{n+2}(N)  \ar[r] \ar@{-->}[d]
		& \llm^{(3)} \Hom (H_c^{n+2}(N),G) \ar[r] \ar[d]^-{\overset{~~}{\simeq}}  & 0 \\
		0 &\llm^{(1)} \Hom (H_c^{n+1}(N),G) \ar[l] \ar@{=}[d] & \llm^{(1)} \os{{\,}_M}{H}_{n+1}(N)  \ar[l] \ar@{-->}[d]
		& \llm^{(1)} \Ext (H_c^{n+2}(N),G) \ar[l] & 0 \ar[l] \\
		0 \ar[r] & \llm^{(1)} \Hom (H_c^{n+1}(N),G) \ar[r] & \os{{\,}_M}{H}_{n}(A,G)  \ar[d] \\
		&& \llm \os{{\,}_M}{H}_{n}(N,G)  \ar[r] \ar@{-->}[d] & \llm^{(2)} \Hom (H_c^{n+1}(N),G) \ar[r] \ar@{=}[d] & 0 \\
		0 &\llm^{(2)} \Ext (H_c^{n+2}(N),G) \ar[l] \ar[d]^-{\overset{~~}{\simeq}}
		& \llm^{(2)} \os{{\,}_M}{H}_{n+1}(N)  \ar[l] \ar@{-->}[d]
		& \llm^{(2)} \Hom (H_c^{n+1}(N),G) \ar[l] & 0 \ar[l] \\
		0 \ar[r] &\llm^{(4)} \Hom (H_c^{n+2}(N),G) \ar[r] & \llm^{(4)} \os{{\,}_M}{H}_{n+2}(N)  \ar[r] \ar@{-->}[d]
		& \llm^{(4)} \Ext (H_c^{n+3}(N),G) \ar[r] & 0  \\
		&& \vdots
	}
	\]
\end{proof}

\paragraph*{ \bf 2 The Kolmogoroff homology}
Our aim is to study the tautness property for other exact homology theories \cite{7}, \cite{12}, \cite{3}, \cite{10}. Among them, one of the main places is taken by {\color{black}the} Kolmogoroff homology, which was defined as earlyer as in 1936  \cite{7}, {\color{black}\cite{15}.} A. N. Kolmogoroff defined homology on the category of locally compact Hausdorff spaces and proper maps with a compact coefficient group \cite{7}, {\color{black}\cite{15}}. Using the homology defined by all finite partitions, in the paper \citep{3} G. S. Chogoshvili proved that Kolmogoroff homology and {\color{black}the} Alexandroff-\v{C}ech homology groups are isomorphic {\color{black}on the category of compact Hausdorff spaces} for a compact coefficient group {\color{black}\cite{15}}. Since the Steenrod and {\color{black}the} Alexandroff-\v{C}ech homologies are isomorphic on the category of compact metric spaces {\color{black}for a compact coefficient group }\cite{12}, we have the isomorphisms 

\begin{equation}\label{eq61}
\os{{\,}_K}{H}_*(X,G) \os{1}{\approx} \os{ch}{H}_*(X,p,G) \os{2}{\approx} \os{ch}{H}_*(X,sp,G) \os{3}{\approx} \check{H}_*(X,G) \os{4}{\approx} \os{st}{H}_*(X,G),
\end{equation}
where $\os{K}{H}_*(-,G)$ is the Kolmogoroff \cite{7}, \cite{15}, $\os{ch}{H}_*(-,p,G)$ is the Chogoshvili projective \citep{3}, {\color{black}\cite{15}}, $\os{ch}{H}_*(-,sp,G)$ is the Chogoshvili spectral  \citep{3}, {\color{black}\cite{15}}, $\check{H}_*(-,G)$ is the Alexandroff-\v{C}ech  \cite{4} and $\os{st}{H}_*(-,G)$ is the Steenrod  \cite{12} homology theories.
Later the Kolmogoroff and the Chogoshvili homology theories were generalized and defined even for a discrete coefficient groups \cite{15}. However, there are no isomorphisms 2 and 4 as in \eqref{eq61} {\color{black}\cite{15}}.
Consequently, there was a natural interest to find the connection between the Kolmogoroff  and Steenrod homology groups for any discrete groups. Using the {\color{black}Uniqueness Theorem} given by {\color{black}Milnor \citep{10},} in the paper \cite{2} it is proved that on the category of compact metric spaces {\color{black}the Kolmogoroff and the Steenrod homologies} are isomorphic {\color{black}even} for {\color{black}any} discrete coefficient groups {\color{black}\cite{15}. Therefore, to study tautness properties for an exact homology theory, it is crucial to find a connection between the Kolmogoroff and the Massey homology theories.}

By {\color{black}Theorem 2.8} \citep[\S 2.2]{9}, if $X$ be a locally compact Hausdorff {\color{black}noncompact} space and $\dot{X}$ its one point Alexandroff compactification, then the inclusion $\mu:X\to \dot{X}$ induces an isomorphism 
\begin{equation}\label{eq2}
\mu^*: H_c^q(X,G) \os{\sim}{\lra} {H}_c^q(\dot{X},*,G).
\end{equation}

\begin{corollary}\label{c2}
The inclusion $\rho:X\to (\dot{X},*)$, where $\dot{X}$ is the one point Alexandroff compactification  of locally compact Hausdorff space $X$, indices an isomorphism 
\begin{equation}\label{eq5}
  \rho_*: \os{{\,}_M}{H}_*(X,G) \os{\sim}{\lra} \os{{\,}_M}{H}_*(\dot{X},*,G).
\end{equation}
\end{corollary}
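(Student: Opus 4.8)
The plan is to deduce the statement from the universal‑coefficient sequence \eqref{eq1}, its relative analogue for the pair $(\dot X,*)$, and the cohomological isomorphism \eqref{eq2}. If $X$ is compact then $\dot X=X\sqcup\{*\}$, $\rho$ is the inclusion of the open‑and‑closed summand $X$, and the assertion is immediate from the additivity of $\os{{\,}_M}{H}_*$ (or from the exact sequence of the pair); so from now on I assume $X$ is noncompact, so that \eqref{eq2} is available.

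First I would record a relative version of \eqref{eq1}. The relative Massey cochain complex $C_c^*(\dot X,*)$ is a subcomplex of the free cochain complex $C_c^*(\dot X)$ (Theorem 4.1 \cite[\S 4.4]{9}), hence is itself a cochain complex of free abelian groups; the same argument by which \eqref{eq1} was obtained therefore yields an exact sequence
$$
0 \lra \Ext(H_c^{n+1}(\dot X,*),G) \lra \os{{\,}_M}{H}_n(\dot X,*,G) \lra \Hom(H_c^n(\dot X,*),G) \lra 0,
$$
which — just like \eqref{eq1} — is natural in the underlying cochain complex, and hence natural for the cochain map $C_c^*(\dot X,*)\to C_c^*(X)$ that underlies \eqref{eq2}. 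Stacking the sequence \eqref{eq1} for $X$ over this sequence, with all vertical arrows induced by $\rho$, gives a commutative ladder whose middle arrow is exactly $\rho_*$ and whose two outer arrows are obtained by applying the functors $\Ext(-,G)$ and $\Hom(-,G)$ to an isomorphism supplied by \eqref{eq2}. Since $\Ext(-,G)$ and $\Hom(-,G)$ carry isomorphisms to isomorphisms, the outer vertical arrows are isomorphisms, and the five lemma forces $\rho_*\colon\os{{\,}_M}{H}_n(X,G)\to\os{{\,}_M}{H}_n(\dot X,*,G)$ to be an isomorphism for every $n$.

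The only delicate point is the identification — implicit in Massey's construction of $\os{{\,}_M}{H}_*$ \cite[\S 4.6]{9} — of $\rho_*$ with $H_n(\Hom(-,G))$ applied to the restriction cochain map $C_c^*(\dot X,*)\to C_c^*(X)$; once this is granted, the ladder commutes strictly and the statements about its outer terms are automatic. In fact, with that identification in hand the five lemma can be bypassed: by \eqref{eq2} the restriction cochain map is a quasi‑isomorphism, a quasi‑isomorphism between bounded‑below complexes of free abelian groups is a chain homotopy equivalence, and the additive functor $\Hom(-,G)$ preserves chain homotopies, so $\Hom(C_c^*(X),G)\to\Hom(C_c^*(\dot X,*),G)$ is again a chain homotopy equivalence and therefore induces the desired isomorphism $\rho_*$ in every degree. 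I expect this bookkeeping about functoriality at the cochain level, rather than any homological‑algebra difficulty, to be the main thing to pin down.
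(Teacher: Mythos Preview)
Your proof is correct and follows essentially the same route as the paper: set up the commutative ladder of universal-coefficient short exact sequences \eqref{eq1} for $X$ and for $(\dot X,*)$, observe that the outer vertical maps are isomorphisms by \eqref{eq2}, and conclude by the Five Lemma. The paper's argument is just this, stated more tersely; your added remarks on the compact case, on why the relative complex is free, and on the alternative chain-homotopy-equivalence bypass are correct embellishments rather than a different strategy.
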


 \begin{proof}
The inclusion $\rho:X\to (\dot{X},*)$ induces a commutative diagram with the exact sequences  
$$
\xymatrix{
0 \ar[r] & \Ext(H_c^{n+1}(X),G) \ar[r] \ar[d]^-{\rho'} & \os{{\,}_M}{H}_n(X,G) \ar[r] \ar[d]^-{\rho_n} & \Hom(H_c^n(X),G) \ar[r] \ar[d]^-{\rho''} & 0 \\
0 \ar[r] & \Ext({H}_c^{n+1}(\dot{X},*),G) \ar[r]  & \os{{\,}_M}{H}_n(\dot{X},*,G) \ar[r]  & \Hom({H}_c^n(\dot{X},*),G) \ar[r] & 0\,.
}
$$
By the isomorphism \eqref{eq2}, the homomorphisms $\rho'$ and $\rho''$ are isomorphisms as well. Therefore, by the Lemma of Five Homomorphisms we obtain the required statement.
\end{proof}

{\color{black}Now we will define the Kolmogoroff homology theory and using the isomorphism \eqref{eq5}, we will find the connection of it with the Massey homology theory.

Let $X$ be a locally compact Hausdorff space. A subset $A$ of space $X$ is called bounded if $\bar{A}$ is compact {\cite[Definition 6.1, \S X.6]{4}}.}

\begin{definition} Let $X$ be a {\color{black}locally} compact space, $E_X$ be {\color{black}the} set of all bounded subsets $E_i$ of $X$, $G$ be an abelian group. Denote by $E_X^{n+1}=E_X \times E_X \times \dots  \times E_X$ - a direct product of $E_X$. An $n$-dimension Kolmogoroff chain of the space $X$ is called a function $f_n:E_X^{n+1} \to G$ satisfying the following conditions: 
\begin{itemize}
\item[$K1)$] If $E_i=E_i'\cup E_i''$ and $E_i'\cap E_i''=\vnth$, then
$$
f_n(E_0,\dots,E_i,\dots,E_n) =f_n(E_0,\dots,E_i',\dots,E_n) +f_n(E_0,\dots,E_i'',\dots,E_n);
$$

\item[$K2)$] $f_n$ will not change by even permutation  and will changes just the sign by odd permutation of argument; $f_n=0$, if two arguments are the same;

\item[$K3)$] If $\ol{E}_0 \cap \cdots \cap \ol{E}_n=\vnth$, then $f_n(E_0,\dots,E_n)=0$.
\end{itemize}
\end{definition}

The sum $f_n'+f_n''$ of two $f_n'$, $f_n''$ function{\color{black}s} is defined  by the following equation
$$
(f_n'+f_n'')(E_0,\dots,E_n)=f_n'(E_0,\dots,E_n)+f_n''(E_0,\dots,E_n).
$$
It is clear that {\color{black} the set of} all $n$-dimensional functions
$f_n$ is an abelian group, which is {\color{black}denoted} by $K_n(X,G)$.
The {\color{black}boundary} operator $\Dl:K_n(X,G) \to K_{n-1}(X,G)$ is defined by the equation 
$$
\Dl f_n(E_0,\dots,E_{n-1}) =f_n(U,E_0,\dots,E_{n-1}),
$$
where $U$ is an open bounded subset which includes  $\bigcup\limits_{i=1}^{n-1} \ol{E}_i$. Since the space $X$ is locally compact, such $U$ exists and the {\color{black}boundary} operator $\Dl$ does not {\color{black}depend} on the choice of $U$.

The homology of the chain complex $K_*(X,G) =\{K_n(X,G),\Dl\}$ is called {\color{black}the Kolmogoroff } homology of a locally compact space $X$ and it is denoted by $\os{{\,}_K}{H}_*(X,G)$.

\begin{definition}
A locally finite system of {\color{black}bounded} subspaces $e_i$ of space $X$, which are pairwise {\color{black}non-intersecting} and the sum of them that gives the {\color{black}whole} space  $X=\cup e_i$ is called a regular partition.
\end{definition}

\begin{lemma}\label{l1}
For each locally compact, paracompact space $X$ there exits a regular partition.
\end{lemma}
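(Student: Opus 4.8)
The plan is to build the partition in two moves: first produce a locally finite open cover of $X$ by bounded sets, then ``disjointify'' it transfinitely. First, since $X$ is locally compact, every point $x\in X$ has an open neighbourhood $V_x$ with compact closure $\ol{V}_x$, so $\{V_x\}_{x\in X}$ is an open cover of $X$ by bounded sets. Since $X$ is paracompact, this cover admits a locally finite open refinement $\mathcal{U}=\{U_\beta\}_{\beta\in B}$. Each $U_\beta$ is contained in some $V_x$, so $\ol{U}_\beta\sbs\ol{V}_x$ is compact, and hence every $U_\beta$ is bounded.

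Next I would well-order the index set $B$ and set
$$
e_\beta=U_\beta\setminus\bigcup_{\gamma<\beta}U_\gamma\qquad(\beta\in B),
$$
discarding the indices for which $e_\beta=\vnth$. By construction $e_\beta\sbs U_\beta$, so $\ol{e}_\beta\sbs\ol{U}_\beta$ is compact and each $e_\beta$ is bounded. If $\gamma<\beta$ then $e_\beta\cap U_\gamma=\vnth$, so in particular $e_\beta\cap e_\gamma=\vnth$; thus the $e_\beta$ are pairwise non-intersecting. For the covering property, given $x\in X$ let $\beta$ be the least element of $B$ with $x\in U_\beta$ (it exists because the $U_\beta$ cover $X$ and $B$ is well-ordered); then $x\notin U_\gamma$ for every $\gamma<\beta$, so $x\in e_\beta$, whence $\bigcup_\beta e_\beta=X$. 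Finally, local finiteness of $\{e_\beta\}$ is inherited from $\mathcal{U}$: any neighbourhood of a point that meets only finitely many $U_\beta$ meets only finitely many $e_\beta$, since $e_\beta\sbs U_\beta$. Hence $\{e_\beta\}$ is a regular partition of $X$.

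I do not expect a genuine obstacle here; the only point requiring care is that the refinement $\mathcal{U}$ may be uncountable, which is why the disjointification is performed transfinitely through a well-ordering of $B$ rather than by ordinary induction. One should also note that the sets $e_\beta$ need not be open or closed, but Definition 4 only asks for bounded, pairwise disjoint subspaces that form a locally finite cover of $X$, so this causes no difficulty. (Alternatively, one could invoke the structure theorem that a locally compact, paracompact Hausdorff space is a topological sum of $\sigma$-compact open subspaces and build the partition on each summand; the direct argument above has the advantage of not using the Hausdorff hypothesis.)
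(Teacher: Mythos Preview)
Your argument is correct and follows the same route as the paper: use local compactness to cover $X$ by bounded open sets, refine to a locally finite cover by paracompactness, well-order the index set, and disjointify transfinitely via $e_\beta=U_\beta\setminus\bigcup_{\gamma<\beta}U_\gamma$. Your write-up is in fact more careful than the paper's, which omits the explicit verifications of boundedness, disjointness, covering, and local finiteness that you supply.
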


\begin{proof}
Since $X$ is a locally compact space, for each point  $x\in X$  there exists a bounded neighborhood $U_x$.
Since space $X$ is paracompact as well, an open covering $\{U_x\}_{x\in X}$ has a locally finite  refinement $\{O_\lb\}$, which is contained with bounded subspaces $O_\lb$. If we write the elements of the covering $\{O_\lb\}$ as a transfinite sequence $O_1,O_2,\dots,O_\lb,\dots$, then we construct a regular covering in the following way: $O_1,O_2\setminus O_1,\dots,O_\lb\setminus \bigcup\limits_{i<\lb} O_i$, where  $O_i$  runs through all  the ordinal numbers preceding $\lb$.
\end{proof}

Denote by $S=\{S_\al\}$ {\color{black}the} system of all regular partitions $S_\al$ of a space $X$.

\begin{lemma}\label{l2}
Each compact subspace $F$ of a locally compact space has a nonempty intersection only with finite number of closures  $e_i^\al\in S_\al$.
\end{lemma}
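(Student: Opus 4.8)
The plan is to deduce the statement from local finiteness of the partition $S_\al=\{e_i^\al\}$ together with compactness of $F$, passing through the standard fact that the family of closures of a locally finite family is again locally finite.

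First I would unwind the definition: since $S_\al$ is a regular partition, the system $\{e_i^\al\}_i$ is locally finite, so every point $x\in X$ has an open neighborhood $V_x$ that meets only finitely many of the sets $e_i^\al$. I would then upgrade this to the closures. If $V_x$ is open and $V_x\cap e_i^\al=\vnth$, then also $V_x\cap \ol{e_i^\al}=\vnth$: a point lying in $V_x\cap\ol{e_i^\al}$ would belong to $\ol{e_i^\al}$ while having the open neighborhood $V_x$ disjoint from $e_i^\al$, which is impossible. Hence $V_x$ meets only finitely many of the closures $\ol{e_i^\al}$ — exactly those $\ol{e_i^\al}$ for which $V_x\cap e_i^\al\neq\vnth$, a finite index set which I will call $I_x$.

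Next, $\{V_x\}_{x\in F}$ is an open cover of the compact set $F$, so it admits a finite subcover $V_{x_1},\dots,V_{x_m}$ with $F\sbs V_{x_1}\cup\cdots\cup V_{x_m}$. If $\ol{e_i^\al}\cap F\neq\vnth$, then $\ol{e_i^\al}$ meets some $V_{x_j}$, so $i\in I_{x_1}\cup\cdots\cup I_{x_m}$; this union is finite, which gives the claim.

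There is no serious obstacle in this argument; the only place that deserves a line of justification is the step from local finiteness of $\{e_i^\al\}$ to local finiteness of $\{\ol{e_i^\al}\}$, which relies on choosing the distinguished neighborhoods $V_x$ to be open (local compactness of $X$ guarantees one may even take the $V_x$ bounded, though boundedness is not needed here).
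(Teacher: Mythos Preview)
Your proof is correct and follows essentially the same route as the paper: local finiteness gives, for each $x\in F$, an open neighborhood meeting only finitely many $e_i^\al$; compactness of $F$ yields a finite subcover; and the passage to closures uses exactly the observation that an open set meets $e_i^\al$ if and only if it meets $\ol{e_i^\al}$. The only cosmetic difference is that the paper records this last equivalence at the end rather than upgrading to closures beforehand.
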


\begin{proof}
Since $S_\al$ is {\color{black}a} locally finite system, for each point $x\in F$ there exists neighborhood $U_x$, which has a nonempty intersection only with finite many elements $e_i^\al\in S_\al$. From the collection $\{U_x\}_{x\in F}$ of the neighborhoods  a finite subsystem can be chosen, union of which covers the space $F$. Since for each open subspace $U$ and subspace $B$ there is an equivalency  $U\cap B \neq \vnth \Leftrightarrow U\cap \ol{B}\neq \vnth$, we obtain truthfulness of the lemma.
\end{proof}
Denote by $N_\al$ the nerve of a regular partition $S_\al\in S$, which consists of simplexes $\sg^n=(e_0^\al,\dots,e_n^\al)$, for which $\cap \ol{e}{}_i^\al\neq \vnth$. By the lemma \ref{l2} the nerve $N_\al$ is locally finite \cite{3}, \cite{15}.

If $S_\al<S_\bt$, i.e. $S_\bt$ is refinement of $S_\al$ and if for each vertex  $e_j^\bt\in N_\bt$ we take the uniquely defined vertex $e_i^\al\in N_\al$, which contains $e_j^\bt$, then we obtain a simplicial map $\pi_{\bt\al}:N_\bt\to N_\al$. By Lemma \ref{l2}, the map $\pi_{\bt\al}$ will be locally finite \cite{4}, i.e. inverse image of each simplex contains only finite many numbers of simplexes.

If we take for each $S_\al\in S$ the group of the infinite chains $C_n^{inf}(N_\al,G)$ of nerve $N_\al$ and homomorphisms $\pi_{\bt\al}^*:C_n^{inf}(N_\bt,G) \to C_n^{inf}(N_\al,G)$, induced by simplicial maps $\pi_{\bt\al}$, then we obtain {\color{black}an} inverse system $\{C_n^{inf}(N_\al,G),\pi_{\bt\al}^*\}$, the inverse limit group of which is denoted by 
$$
C_n^{inf}(X,G) =\llm \{C_n^{inf} (N_\al,G),\pi_{\bt\al}^*\}.
$$

The {\color{black}boundary operator} $\pa :C_n^{inf}(X,G) \to C_{n-1}^{inf} (X,G)$ is defined by {\color{black}the boundary operators} $\pa_\al:C_n^{inf} (N_\al,G) \to C_{n-1}^{inf}(N_\al,G)$, which commute with homomorphisms ~$\pi_{\bt\al}^*$. The homology group of the obtained complex $C_*^{inf}(X,G)$ is called the Chogoshvili {\color{black}projection} homology group and s denoted by $\os{ch}{H}_*(X,p,G)$.

\begin{definition} Let $A=\{A_i\}$ and $B=\{B_j\}$ be finite systems of sets such that $B=\{B_j\}$ consists of pairwise non-intersecting sets. We will say that a system $B$ is a mosaic of the system $A$, if for each $B_j\in B$ there exists $A_i\in A$ such that $B_j\sbs A_i$ and $A_i=\bigcup\limits_j B_{i_j}$, where $B_{i_j}\in B$.

\end{definition}

\begin{lemma}\label{l3}
 For each finite system $A=\{A_i\}$, $i=0,\dots,n$ of sets $A_i$ there exists a mosaic.
\end{lemma}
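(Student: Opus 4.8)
The plan is to build the mosaic from the atoms of the finite Boolean algebra generated by $A_0,\dots,A_n$ inside their union. Concretely, put $X=\bigcup_{i=0}^n A_i$ and, for every nonempty subset $S\subseteq\{0,1,\dots,n\}$, set
$$
B_S=\Bigl(\bigcap_{i\in S}A_i\Bigr)\setminus\Bigl(\bigcup_{i\notin S}A_i\Bigr),
$$
retaining only those $S$ for which $B_S\neq\vnth$. I claim that $B=\{B_S\}$ is a mosaic of $A=\{A_i\}$, and I would verify the three defining properties in turn.

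First, $B$ is a finite system, since there are at most $2^{n+1}$ indices $S$. Second, the blocks $B_S$ are pairwise non-intersecting: if $S\neq S'$ then some $i$ lies, say, in $S\setminus S'$, so $B_S\subseteq A_i$ while $B_{S'}\subseteq X\setminus A_i$, whence $B_S\cap B_{S'}=\vnth$; the same remark shows $B_S\subseteq A_i$ for any chosen $i\in S$, which gives the containment clause of the definition. Third, for the covering clause fix $i$ and observe that for $x\in X$ one has $x\in A_i$ if and only if $x\in B_{S(x)}$, where $S(x)=\{j:x\in A_j\}$ is nonempty and contains $i$; hence $A_i=\bigcup_{S\ni i}B_S$, so $A_i$ is exactly the union of those members of $B$ indexed by subsets containing $i$. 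This establishes the lemma.

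There is no real obstacle here; the only point requiring care is the bookkeeping — one must discard the ``outer'' atom $B_\vnth=X\setminus\bigcup_i A_i$ (and any empty atoms) so that every surviving block lies in some $A_i$, while keeping enough blocks to recover each $A_i$ as a union, as checked above. Alternatively, the statement follows by an easy induction on $n$: starting from a mosaic $\{B_k\}$ of $\{A_0,\dots,A_{n-1}\}$, replace each $B_k$ by the two pieces $B_k\cap A_n$ and $B_k\setminus A_n$ and adjoin the block $A_n\setminus\bigcup_{i<n}A_i$, discarding empties; a direct verification shows the result is a mosaic of $\{A_0,\dots,A_n\}$.
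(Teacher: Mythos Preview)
Your proof is correct and follows essentially the same route as the paper: the paper's mosaic is precisely the collection of atoms $\bigcap_{i\in S}A_i\setminus\bigcup_{i\notin S}A_i$ over nonempty $S\subseteq\{0,\dots,n\}$, just written in more cumbersome index notation. Your exposition is in fact cleaner, and the inductive alternative you sketch is a harmless bonus.
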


\begin{proof}
The system consisting of subspaces $\bigcap\limits_{i=0}^n A_i$, $\bigcap\limits_{t=1}^n A_t \setminus \bigcup\limits_{i_*\neq i_t} A_{i_*}$, where $i_1,\dots,i_p$ -- $p$ are different indexes from the system  $i=0,\dots,n$, $1\leq p\leq n$ and  $i_*$ obtains all value in the same system, except $i_1,\dots,i_p$ which is a mosaic.
\end{proof}

\begin{lemma}\label{l4}
If $\wt{f}_n$ is a function on the directed system  $e_0,\dots,e_n$ mutually non-intersecting {\color{black}bounded} subspaces $e_i$ of locally compact space $X$, which satisfies the conditions $K1)$--$K3)$, then it can be extended to the function $f_n\in K_n(X,G)$.
\end{lemma}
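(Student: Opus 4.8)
The plan is to extend $\wt{f}_n$ from tuples of pairwise non-intersecting bounded sets to all of $E_X^{n+1}$ by decomposing an arbitrary tuple into its ``mosaic pieces'' and summing, the existence of mosaics being guaranteed by Lemma \ref{l3}. Concretely, given $(E_0,\dots,E_n)\in E_X^{n+1}$, fix a mosaic $B=\{B_j\}$ of the finite system $\{E_0,\dots,E_n\}$; each $B_j$ lies in some $E_i$, so $\ol{B}_j$ is a closed subset of the compact set $\ol{E}_i$ and hence $B_j$ is bounded, and writing $J_i=\{\,j:B_j\sbs E_i\,\}$ one has $E_i=\bigcup_{j\in J_i}B_j$ with each $J_i$ finite. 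Since distinct members of a mosaic are disjoint and, by $K2)$, $\wt{f}_n$ vanishes on any tuple with a repeated argument, we set
$$
f_n(E_0,\dots,E_n)=\sum_{j_0\in J_0}\cdots\sum_{j_n\in J_n}\wt{f}_n(B_{j_0},\dots,B_{j_n}),
$$
a finite sum, hence a well-defined element of $G$ for the chosen mosaic $B$.

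The first task is to show that this value is independent of the mosaic. The key elementary observation is that every member $B_j$ of a mosaic of $\{E_0,\dots,E_n\}$ is, for each $i$, either contained in $E_i$ or disjoint from $E_i$ (immediate from the fact that $E_i$ is a union of members of the mosaic and these are pairwise disjoint). Given two mosaics $B=\{B_j\}$ and $C=\{C_k\}$, the nonempty sets $B_j\cap C_k$ again form a mosaic of $\{E_0,\dots,E_n\}$ refining both; applying $K1)$ for $\wt{f}_n$ repeatedly in each coordinate to the disjoint decompositions $B_j=\bigcup_k(B_j\cap C_k)$ rewrites the $B$-sum as the sum over this common refinement, and the observation above shows that the relevant index sets match, so the $C$-sum equals the same thing. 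Since a system of pairwise non-intersecting bounded sets is its own mosaic, $f_n$ restricted to such tuples returns $\wt{f}_n$; thus $f_n$ extends $\wt{f}_n$.

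It remains to verify $K1)$--$K3)$ for $f_n$. Condition $K3)$ is immediate: if $\ol{E}_0\cap\cdots\cap\ol{E}_n=\vnth$, then for every summand $\ol{B}_{j_0}\cap\cdots\cap\ol{B}_{j_n}\sbs\ol{E}_0\cap\cdots\cap\ol{E}_n=\vnth$, so each term vanishes by $K3)$ for $\wt{f}_n$. For $K2)$, the permutation behaviour of $f_n$ is inherited termwise from $\wt{f}_n$; and if $E_i=E_k$ with $i\neq k$, then $J_i=J_k$, and the double sum over these two coordinates splits into the diagonal part, which vanishes by $K2)$, and the off-diagonal part, which cancels in pairs under the transposition of the $i$-th and $k$-th arguments. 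Property $K1)$ is the most delicate point: given $E_i=E_i'\cup E_i''$ with $E_i'\cap E_i''=\vnth$, one starts from a mosaic of $\{E_0,\dots,E_n\}$, refines it by splitting each member contained in $E_i$ along $E_i=E_i'\sqcup E_i''$, and, invoking the independence of the mosaic together with common refinements with mosaics of the systems obtained by replacing $E_i$ by $E_i'$ and by $E_i''$, checks that the sum over the $i$-th coordinate splits accordingly into the values $f_n(E_0,\dots,E_i',\dots,E_n)$ and $f_n(E_0,\dots,E_i'',\dots,E_n)$. I expect this last bookkeeping — tracking the index sets through the various refinements — to be the main obstacle; everything else is routine. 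Granting it, $f_n\in K_n(X,G)$ and extends $\wt{f}_n$, as required.
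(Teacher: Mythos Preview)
Your proposal is correct and follows essentially the same approach as the paper: define $f_n$ via a mosaic (Lemma~\ref{l3}), prove well-definedness by passing to the common refinement of two mosaics, and conclude that $f_n\in K_n(X,G)$. If anything, you are more thorough than the paper, which simply asserts that $K1)$--$K3)$ hold for $f_n$ without spelling out the verifications you carry out.
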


\begin{proof}
By the lemma \ref{l3}, for each directed system $E_0,\dots,E_n$ of {\color{black}bounded} subspaces $E_i$ there exists a mosaic $\{e_{i_j}\}$ such that $E_i=\cup e_{i_j}$.  Therefore, the function $f_n(E_0,\dots,E_n)=\sum\limits_{0_j,\dots,n_j} \wt{f}_n(e_{0_j},\dots,e_{n_j})$ does not depend on the choice of mosaic. Indeed, let $\{e_{i_j}'\}$ be another  mosaic of the system  
$E_0,\dots,E_n$ and $f_n'(E_0,\dots,E_n)=\sum\limits_{0_j,\dots,n_j}\wt{f}_n(e_{0_j}',\dots,e_{n_j'})$. It is clear that the intersection $\{e_{i_j}\}\wedge \{e_{i_j}'\}=\{e_{i_j}''\}$ is a mosaic not only for $\{E_i\}$, but for each  mosaic. Therefore, we have 
\begin{align*}
f_n(E_0,\dots,E_n)& =\sum \wt{f}_n(e_{0_j},\dots,e_{n_j})=\sum\wt{f}_n(e_{0_j}'',\dots,e_{n_j}'')\\
& =\sum \wt{f}_n(e_{0_j}',\dots,e_{n_j}')=f_n'(e_0,\dots,e_n).
\end{align*}
Thus, the defined function $f_n$, satisfies the conditions  $K1)$--$K3)$ and so $f_n\in K_n(X,G)$.
\end{proof}

\begin{theorem}\label{t1}
Let $X$ be a locally compact, paracompact Hausdorff space. Then the Kolmogoroff homology $\os{{\,}_K}{H}_*(X,G)$ is isomorphic to the Chogoshvili {\color{black}projection} homology  $\os{ch}H_*(X,p,G)$.
\end{theorem}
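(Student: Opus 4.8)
The plan is to establish the stronger statement that the chain complexes $K_*(X,G) = \{K_n(X,G),\Dl\}$ and $C_*^{inf}(X,G) = \llm\{C_n^{inf}(N_\al,G),\pi_{\bt\al}^*\}$ are isomorphic; passing to homology then yields $\os{{\,}_K}{H}_*(X,G) \cong \os{ch}H_*(X,p,G)$. First I would define a homomorphism $\Phi_n \colon K_n(X,G) \to C_n^{inf}(X,G)$ by restriction to nerves: for $f_n \in K_n(X,G)$ and a regular partition $S_\al = \{e_i^\al\}$, let $\Phi_n(f_n)_\al$ be the infinite $n$-chain of $N_\al$ whose value on a simplex $\sg^n = (e_0^\al,\dots,e_n^\al)$ is $f_n(e_0^\al,\dots,e_n^\al)$. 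By Lemma \ref{l2} every compact subset of $X$ meets only finitely many of the closures $\ol{e}{}_i^\al$, so this assignment has locally finite support and indeed lies in $C_n^{inf}(N_\al,G)$. If $S_\bt$ refines $S_\al$, condition $K1)$ shows that $\pi_{\bt\al}^*\bigl(\Phi_n(f_n)_\bt\bigr) = \Phi_n(f_n)_\al$, since each vertex $e_i^\al$ is the disjoint union of the $e_j^\bt$ it contains; hence the family $\{\Phi_n(f_n)_\al\}$ is an element of $\llm C_n^{inf}(N_\al,G) = C_n^{inf}(X,G)$, and $\Phi_n$ is natural in $X$.

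Conversely, I would build the inverse $\Psi_n$. Given a compatible family $c = \{c_\al\} \in C_n^{inf}(X,G)$, first define a function $\wt f_n$ on $(n+1)$-tuples of pairwise disjoint bounded subsets $e_0,\dots,e_n$ of $X$: by Lemma \ref{l1} there is a regular partition $\{O_\lb\}$ of $X$, and complementing $\{e_0,\dots,e_n\}$ by the sets $O_\lb \setminus \bigcup_i e_i$ yields a regular partition $S_\al$ having $e_0,\dots,e_n$ among its elements; set $\wt f_n(e_0,\dots,e_n) = c_\al\bigl((e_0,\dots,e_n)\bigr)$ if $(e_0,\dots,e_n)$ spans a simplex of $N_\al$ and $0$ otherwise. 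Independence of the choice of $S_\al$ follows by passing to a common refinement and using $c_\al = \pi_{\bt\al}^* c_\bt$ together with the additivity of $\pi_{\bt\al}^*$ along its fibres, while conditions $K1)$--$K3)$ for $\wt f_n$ are immediate (for $K3)$, a tuple whose closures have empty intersection is never a simplex of any nerve). By Lemma \ref{l4}, $\wt f_n$ extends uniquely to some $f_n \in K_n(X,G)$, and we put $\Psi_n(c) = f_n$. A direct comparison on partition elements, using $K3)$ to handle the non-simplex tuples, shows that $\Phi_n$ and $\Psi_n$ are mutually inverse, so each $\Phi_n$ is an isomorphism of groups.

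It remains to check that $\Phi = \{\Phi_n\}$ is a chain map, i.e.\ that $\pa_\al \circ \Phi_n = \Phi_{n-1} \circ \Dl$, where $\Dl f_n(E_0,\dots,E_{n-1}) = f_n(U,E_0,\dots,E_{n-1})$ with $U$ any open bounded set containing $\bigcup_i \ol E_i$. Fixing an $(n-1)$-simplex $\tau = (e_1^\al,\dots,e_n^\al)$ of $N_\al$, the value $\bigl(\pa_\al \Phi_n(f_n)_\al\bigr)(\tau)$ is a finite alternating sum of the numbers $f_n(e_0^\al,e_1^\al,\dots,e_n^\al)$ over vertices $e_0^\al$ of $N_\al$, which by $K3)$ is concentrated on those $e_0^\al$ whose closure meets $\bigcap_{i\ge 1}\ol{e}{}_i^\al$; on the other side, after passing to a regular partition $S_\bt$ refining $S_\al$ in which the chosen $U$ is a union of elements, one expands $f_n(U,e_1^\al,\dots,e_n^\al)$ by $K1)$ and discards by $K3)$ the terms whose closures miss $\bigcap_{i\ge 1}\ol{e}{}_i^\al$, obtaining the same sum modulo the bonding maps $\pi_{\bt\al}^*$. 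Making this coherent — choosing the auxiliary neighborhood $U$ compatibly with the system of regular partitions so that the signs and the maps $\pi_{\bt\al}^*$ match up — is the delicate point of the proof, and it is here that local compactness and normality of $X$ are used; the remaining verifications are routine bookkeeping. Once $\Phi$ is known to be a chain isomorphism, the induced map on homology gives $\os{{\,}_K}{H}_*(X,G) \cong \os{ch}H_*(X,p,G)$, as claimed.
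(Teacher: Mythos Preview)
Your proposal is correct and follows essentially the same route as the paper: define the forward map by restricting a Kolmogoroff chain to each nerve $N_\al$, build the inverse by producing a function on disjoint bounded tuples from a compatible family $\{c_\al\}$ and extending via Lemma~\ref{l4}, and check the chain-map identity. One remark: the step you flag as ``the delicate point'' is in fact immediate and needs no refinement $S_\bt$ --- since $\ol U$ is compact, Lemma~\ref{l2} gives a finite decomposition $U=\bigsqcup_t(U\cap e_{i_t}^\al)$, and $K1)$ together with $K3)$ (using $\ol{e_{i_t}^\al\setminus U}\subset X\setminus U$ while $\bigcap_j\ol e{}_j^\al\subset U$) yields $f_n(U,e_0^\al,\dots,e_{n-1}^\al)=\sum_t f_n(e_{i_t}^\al,e_0^\al,\dots,e_{n-1}^\al)=\pa_\al\Phi_n(f_n)(e_0^\al,\dots,e_{n-1}^\al)$ directly in $S_\al$, exactly as the paper does.
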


\begin{proof}
We will prove much stronger statement. In particular, there is an isomorphism of chain complexes $K_*(X,G)$ and $C_*^{inf}(X,G)$,
\begin{equation}\label{eq7}
  K_*(X,G) \approx C_*^{inf}(X,G).
\end{equation}

For each $S_\al\in S$ define a homomorphism  $\xi_\al:K_*(X,G) \to C_*^{inf}(N_\al,G)$ by the formula   $\xi_\al f_n(e_0^\al,\dots,e_n^\al)=f_n(e_0^\al,\dots,e_n^\al)$, where $f_n\in K_n(X,G)$, $(e_0^\al,\dots,e_n^\al)\in N_\al$. Therefore
$$
\xi_\al\Dl f_n(e_0^\al,\dots,e_{n-1}^\al)=\Dl f_n(e_0^\al,\dots,e_{n-1}^\al)=f_n(U,e_0^\al,\dots,e_{n-1}^\al).
$$
By Lemma \ref{l2} and the property of uniqueness of a function $f_n$ (property $K1)$), we have
$$
f_n(U,e_0^\al,\dots,e_{n-1}^\al)=f_n(\cup e_{i_t}^\al,e_0^\al,\dots,e_{n-1}^\al)=
    \sum_{i_t} f_n(e_{i_t}^\al,e_0^\al,\dots,e_{n-1}^\al).
$$
Therefore,
$$
\xi_\al \Dl f_n(e_0^\al,\dots,e_{n-1}^\al) =\sum_{i_t} f_n(e_{i_t}^\al,e_0^\al,\dots,e_{n-1}^\al)=
    \pa_\al f_n(e_0^\al,\dots,e_{n-1}^\al),
$$
i.e. $\xi_\al \Dl=\pa_\al \xi_\al$.

A homomorphism $\xi_\al$ induces an isomorphism 
$$
\xi:K_*(X,G) \lra C_*^{inf} (X,G).
$$
Let $c_n=\{c_{\al,n}\}\in C_n^{inf}(X,G)$ and $E_0,\dots,E_n$ be a system of mutually non-intersecting bounded subspaces. If we add to this system the subspace $X\setminus \bigcup\limits_i E_i$, then we obtain a finite partition $D$ of space $X$. Let $S_\al\in S$, then  $D\wedge S_\al=S_{\al'}\in S$ and for each $E_i=\cup e_{i_j}^{\al'}$, where $e_{i_j}^{\al'}\in S_{\al'}$.

Let $\wt{f}_n$ be a function on the system $E_0,\dots,E_n$ which is defined by 
$$
\wt{f}_n(E_0,\dots,E_n)=\sum\limits_{0_i,\dots,n_j} c_{\al',n}(e_{0_j}^{\al'},\dots,e_{n_j}^{\al'}),
$$
where $0_j,\dots,n_j$ obtain all values, where $(e_{0_j}^{\al'},\dots,e_{n_j}^{\al'})$ denotes a simplex in $N_{\al'}$. It is easy to show that such defined function $\wt{f}_n$ does not depend on the choice of $S_\al$ and it satisfies the properties $K1)$--$K3)$. By Lemma \ref{l4} a function $\wt{f}_n$ can be extended to a function  $f_n\in K_n(X,G)$. If we define a homomorphism 
$$
\eta: C_*^{inf}(X,G) \lra K_*(X,G)
$$
by $\eta(c_n)=f_n$, then it will be inverse of the homomorphism $\xi$.
\end{proof}

\begin{theorem}\label{t2}
Let $\{G_\al,p_{\bt\al}\}_{\al\in \Lb}$ be a direct system of {\color{black}free abelian} groups $G_\al$, which satisfies the following conditions:
\begin{itemize}
\item[1)] For each group $G_\al$ there exists a base $B=\{g_1^\al,g_2^\al,\dots,g_\tau^\al, \dots\}$;

\item[2)] For each pair $\al<\bt$, $\al,\bt\in \Lb$, a set of indexes $\{1,2,\dots,\tau(\bt),\dots\}$ of elements of base  $B_\bt$ can be decomposed with non-intersecting finite subspaces $I_1^{\al\bt},I_2^{\al\bt},\dots,I_{\tau(\al)}^{\al\bt}$ such that 
$$
p_{\al\bt}(g_i^\al)=\begin{cases} \sum\limits_{j\in I_i^{\al\bt}} g_j^\bt, & \text{if} \;\; I_i^{\al\bt}\neq \vnth, \\
0, & \text{if} \;\; I_i^{\al\bt}= \vnth,
\end{cases}
$$
for $i=1,2,\dots,\tau(\al),\dots\,~.$
\end{itemize}
Then the limit of the direct system $\{G_\al,p_{\al\bt}\}_{\al\in \Lb}$ is a free group.
\end{theorem}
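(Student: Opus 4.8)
The plan is to recast condition~2) as a statement about pullbacks of functions along maps of index sets, and then to embed $\rlm G_\al$ into a free abelian group furnished by N{\"o}beling's theorem, so that $\rlm G_\al$, being a subgroup of a free abelian group, is itself free. To begin, write $J_\al$ for the index set of the base $B_\al$ and $[g_i^\al]$ for the image of $g_i^\al$ in $\rlm G_\al$. Condition~2) attaches to each pair $\al\le\bt$ the function $\phi_{\al\bt}\colon J_\bt\to J_\al$ given by $\phi_{\al\bt}(j)=i$ iff $j\in I_i^{\al\bt}$; its fibres $\phi_{\al\bt}^{-1}(i)=I_i^{\al\bt}$ are finite and $p_{\al\bt}(g_i^\al)=\sum_{\phi_{\al\bt}(j)=i}g_j^\bt$. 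Evaluating $p_{\al\gamma}=p_{\bt\gamma}\circ p_{\al\bt}$ on $g_i^\al$ and using uniqueness of base expansions gives $\phi_{\al\gamma}=\phi_{\al\bt}\circ\phi_{\bt\gamma}$, so $\{J_\al,\phi_{\al\bt}\}$ is an inverse system of sets. Put $J_\infty=\llm J_\al$ and, for $i\in J_\al$, let $U_i^\al=\{t\in J_\infty : t_\al=i\}$, where $t_\al$ denotes the $\al$-coordinate of $t$. Since $\phi_{\al\bt}^{-1}(i)$ is finite, $U_i^\al=\bigsqcup_{\phi_{\al\bt}(j)=i}U_j^\bt$ is a \emph{finite} disjoint union for every $\bt\ge\al$.

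Next I would build the homomorphism $\Phi\colon\rlm G_\al\to B(J_\infty,\bZ)$ into the group of bounded $\bZ$-valued functions on $J_\infty$, determined on generators by $\Phi([g_i^\al])=\chi_{U_i^\al}$, the indicator function of $U_i^\al$. The identity $\chi_{U_i^\al}=\sum_{\phi_{\al\bt}(j)=i}\chi_{U_j^\bt}$ shows that $\Phi$ annihilates the defining relations of the direct limit, so $\Phi$ is well defined (and its image indeed consists of bounded, in fact finitely valued, functions). For injectivity, represent an arbitrary element of $\rlm G_\al$ as $x=\sum_k c_k[g_{i_k}^\al]$ with the $i_k\in J_\al$ pairwise distinct; the sets $U_{i_k}^\al$ being pairwise disjoint, $\Phi(x)=0$ forces $U_{i_k}^\al=\vnth$ whenever $c_k\ne 0$. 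The crux is then the equivalence: $U_i^\al=\vnth$ if and only if $\phi_{\al\bt}^{-1}(i)=\vnth$ for some $\bt\ge\al$ --- one implication is immediate, the other is the standard fact that an inverse limit of nonempty finite sets over a directed set is nonempty. Since $\phi_{\al\bt}^{-1}(i)=I_i^{\al\bt}=\vnth$ is exactly $p_{\al\bt}(g_i^\al)=0$, i.e. $[g_i^\al]=0$ in $\rlm G_\al$, every term of $x$ with $c_k\ne 0$ already vanishes in the limit, whence $x=0$; thus $\Phi$ is a monomorphism.

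To finish, by N{\"o}beling's theorem the group $B(J_\infty,\bZ)$ of bounded integer-valued functions on the set $J_\infty$ is free abelian, and every subgroup of a free abelian group is free; therefore $\rlm G_\al\cong\Phi(\rlm G_\al)$ is a free abelian group. The only non-formal ingredients are N{\"o}beling's theorem and the nonemptiness of inverse limits of nonempty finite sets over directed sets; the point I expect to require the most care is the equivalence $U_i^\al=\vnth\Leftrightarrow[g_i^\al]=0$, since it is precisely what makes $\Phi$ injective and hence what lets freeness descend from $B(J_\infty,\bZ)$ to the direct limit.
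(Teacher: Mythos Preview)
Your argument is correct and takes a genuinely different route from the paper's. The paper reduces to the finitely generated case: it refines the index set to pairs $(\al,t)$ with $t$ a finite subset of $B_\al$, checks that the resulting direct system of finitely generated free groups satisfies the hypotheses of Theorem~3 of Kaup--Keane~\cite{6}, and then verifies that the two direct limits agree via an explicit isomorphism $\vf$. Your approach instead dualizes the combinatorics of condition~2) into an inverse system of index sets with finite fibres, embeds $\rlm G_\al$ into $B(J_\infty,\bZ)$ by sending basis classes to indicator functions of cylinder sets, and invokes N{\"o}beling's theorem together with the subgroup theorem for free abelian groups. The key technical point---that $U_i^\al=\vnth$ forces $[g_i^\al]=0$---is handled correctly via compactness of inverse limits of finite sets; note only that one must pass to a common $\bt$ dominating the finitely many $\bt_k$ witnessing $p_{\al\bt_k}(g_{i_k}^\al)=0$, which is implicit in your last line. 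What the paper's approach buys is that it stays within the circle of ideas of~\cite{6} and avoids the comparatively deep N{\"o}beling theorem; what your approach buys is a cleaner, more conceptual picture (the direct limit literally becomes a group of step functions on a profinite-type space) and a proof that is self-contained modulo one citable result, without any auxiliary reindexing.
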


\begin{proof}
Denote by $\ol{B}_\al$ the set of all finite subspaces $\al_t$ of a base $B_\al$ of a group $G_\al$ and by $G_{\al_t}$ a subgroup of group $G_\al$, generated by all elements $\al_t\in \ol{B}_\al$. It is possible to prove that such a group $G_\al$ is the direct limit group of the direct system of subgroups $G_{\al_t}$.

Let $\ol{\Lb}$ be a set $\{(\al,t) \mid \al_t\in \ol{B}_\al\}$. It is considered that $(\al',t')>(\al,t)$, if $\al'>\al$ and $p_{\al\al'}G_{\al_t}\sbs G_{\al'_{t'}}$. It is clear that $\ol{\Lb}$ is a directed set and if we take $G_{\al,t}=G_{\al_t}$ for each pair $(\al,t)\in \ol{\Lb}$, we obtain a direct system $\{G_{(\al,t)},p_{\al\bt}\}$ which satisfies the condition of Theorem~3 \cite{6}. Therefore, the direct limit of the given system is a free abelian group.

Let $G_\infty=\rlm G_\al$ and $G_\infty^*=\rlm G_{(\al,t)}$. Define a homomorphism $\vf:G_\infty^* \to G_\infty$. Since  $(\al',t')>(\al,t)$ we have the following commutative diagram
$$
\xymatrix{
G_{\al,t} \ar[r]^-{\rho_{\al,t}} \ar[dd]_-{p_{\al\al'}} & G_\al \ar[rd]^-{p_\al} \ar[dd]^-{p_{\al\al'}} \\
&& G_\infty \\
G_{\al',t'} \ar[r]_-{\rho_{\al',t'}} & G_{\al'} \ar[ru]_-{p_{\al'}}
}
$$
A homomorphism  $\vf$ is induced by $\vf_{\al,t}=p_\al\rho_{\al,t}$.

$a$) $\vf$ is an epimorphism. Let $x\in G_\infty$ and $x_\al\in G_\al$ be their representatives. Since $G_\al=\rlm G_{\al,t}$, there is a representative $x_{\al,t}\in G_{\al,t}$ of an element $x_\al$. It is clear that a class $x^*\in G_\infty^*$, a representative of which is  $x_{\al,t}$, satisfies the properties $\vf(x^*)=x$.

$b$) $\vf$ is a monomorphism. Let $\vf(x^*)=0$. Since $p_\al \rho_{\al,t}(x_{\al,t})=0$, where $x_{\al,t}$ is a representative of an element $x^*$, there is such $\bt>\al$, that $p_{\al\bt}(\rho_{\al,t}(x_{\al,t}))=0$. Let $G_{\bt,t'}$ be the subgroup of a group $G_\bt$, which is generated by all  $g_j^\bt\in B_\bt$ such that $p_{\al\bt}(g_i^\al)=\sum g_j^\bt$, when $g_i^\al$ runs through the base  $G_{\al,t}$. Since $\rho_{\bt',t'} p_{\al\bt}(x_{\al,t})=p_{\al\bt} \rho_{\al,t}(x_{\al,t})=0$ and $\rho_{\bt,t'}$ are monomorphisms, $p_{\al\bt}(x_{\al,t})=0$ and so $x^*=0$.
\end{proof}

\begin{remark}
Theorem \ref{t2} is a generalization of Theorem 3 \cite{6}, which is proved in the case when the base $B_\al$ is a finite .
\end{remark}

\begin{theorem}
Let $X$ be a locally compact, paracompact Hausdorff space, then  there exists the universal coefficient formula for the {\color{black} Kolmogoroff} homology group: 
$$
0 \lra \Ext(\check{H}^{n+1}(\dot{X},*),G) \lra \os{{\,}_K}{H}_n(X,G) \lra \Hom(\check{H}^n(\dot{X},*),G)\lra 0.
$$
\end{theorem}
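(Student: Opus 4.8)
The plan is to transport the Kolmogoroff homology through Theorems \ref{t1} and \ref{t2} until it becomes the homology of a $\Hom$-complex built from free abelian groups, and then to invoke the same algebraic universal coefficient theorem that already underlies \eqref{eq1}; the only genuinely non-formal ingredient will be the identification of the resulting cohomology with $\check{H}^*(\dot{X},*)$.

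First I would apply Theorem \ref{t1}: on a locally compact, paracompact Hausdorff space, $\os{{\,}_K}{H}_n(X,G)\cong\os{ch}{H}_n(X,p,G)=H_n\bigl(C_*^{inf}(X,G)\bigr)$, where $C_*^{inf}(X,G)=\llm \{C_*^{inf}(N_\al,G),\pi_{\bt\al}^{*}\}$ is the inverse limit, over the system $S=\{S_\al\}$ of regular partitions, of the groups of infinite simplicial chains of the locally finite nerves $N_\al$. For a locally finite complex $N_\al$ there is a canonical identification $C_n^{inf}(N_\al,G)=\Hom\bigl(C_c^n(N_\al,\bZ),G\bigr)$, in which $C_c^n(N_\al,\bZ)$ is the free abelian group of finitely supported simplicial $n$-cochains and the boundary $\pa$ is the dual of the coboundary $\dl$. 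Since every bonding map $\pi_{\bt\al}$ is locally finite (Lemma \ref{l2}), its cochain map $\pi_{\bt\al}^{\#}\colon C_c^n(N_\al,\bZ)\to C_c^n(N_\bt,\bZ)$ is defined and finitely supported, and $\pi_{\bt\al}^{*}=\Hom(\pi_{\bt\al}^{\#},G)$. As $\Hom(-,G)$ carries direct limits to inverse limits, one gets an isomorphism of chain complexes $C_*^{inf}(X,G)\cong\Hom(\mathcal{C}^*,G)$, where $\mathcal{C}^*:=\rlm \{C_c^*(N_\al,\bZ),\pi_{\bt\al}^{\#}\}$.

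Next I would note that the direct system $\{C_c^n(N_\al,\bZ),\pi_{\bt\al}^{\#}\}$ is exactly of the kind treated in Theorem \ref{t2}: each $C_c^n(N_\al,\bZ)$ is free on the dual basis of the $n$-simplices of $N_\al$, and local finiteness of the refinements forces $\pi_{\bt\al}^{\#}$ to send every basis cochain to a finite $\pm$-sum of basis cochains (or to $0$), so hypotheses 1) and 2) of Theorem \ref{t2} are satisfied. Hence $\mathcal{C}^n$ is a free abelian group for every $n$, $\mathcal{C}^*$ is a cochain complex of free abelian groups, and the derivation of \eqref{eq1} (from freeness of the cochain complex, via \cite{11n} and \cite[\S III.4]{8}) applies verbatim to give the short exact sequence
\[
0\lra\Ext\bigl(H^{n+1}(\mathcal{C}^*),G\bigr)\lra H_n\bigl(\Hom(\mathcal{C}^*,G)\bigr)\lra\Hom\bigl(H^n(\mathcal{C}^*),G\bigr)\lra 0 ,
\]
whose middle term is $\os{{\,}_K}{H}_n(X,G)$ by the previous paragraph.

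It then remains to identify $H^*(\mathcal{C}^*)$ with $\check{H}^*(\dot{X},*)$. Exactness of the direct limit gives $H^n(\mathcal{C}^*)=\rlm \{H^n(C_c^*(N_\al,\bZ)),\pi_{\bt\al}^{\#}\}$, the direct limit of the finitely supported simplicial cohomologies of the nerves of the regular partitions; since $N_\al$ is the nerve of the locally finite closed covering $\{\ol{e}{}_i^\al\}$ by compacta, this limit is the Massey (equivalently \v{C}ech) cohomology $H_c^n(X,\bZ)$ with compact supports (\cite{3}, \cite{15}), and by \eqref{eq2} it is isomorphic to $H_c^n(\dot{X},*,\bZ)$, which coincides with $\check{H}^n(\dot{X},*)$ because the Massey and \v{C}ech cohomologies agree on the compact Hausdorff space $\dot{X}$. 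Substituting this into the displayed sequence yields the stated universal coefficient formula. The \textbf{main obstacle} is precisely this last identification: one must check that the regular partitions --- equivalently, the locally finite closed covers by compacta they induce --- form a cofinal system computing compactly supported cohomology, and reconcile the partition-nerve bookkeeping with the standard \v{C}ech machinery; a secondary technical point is the well-definedness and functoriality of the dual cochain maps $\pi_{\bt\al}^{\#}$ in the first step, which again rests on Lemma \ref{l2}.
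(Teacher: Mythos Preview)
Your proposal is correct and follows essentially the same strategy as the paper: Theorem~\ref{t1} to pass to $C_*^{inf}(X,G)$, the identification $C_*^{inf}(X,G)\cong\Hom(\rlm C_f^*(N_\al),G)$, Theorem~\ref{t2} for freeness of the limit cochain complex, the algebraic universal coefficient theorem, and finally the identification of the cohomology of the limit complex with $\check{H}^*(\dot{X},*)$. The one place where the paper takes a slightly different route is the step you flagged as the \textbf{main obstacle}: rather than going through Massey compactly supported cohomology and the isomorphism \eqref{eq2}, the paper invokes Theorem~2.1.1 of \cite{3} and Theorem~6.9 of \cite[\S X.6]{4} to obtain $H_f^*(X)\approx H_\triangle^*(X)\approx\check{H}^*(\dot{X},*)$ directly via Alexandroff cohomology with proper subcomplexes, which sidesteps the cofinality verification you were worried about.
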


\begin{proof}
It is easy to see that the direct system $\{C_f^n(N_\al),\pi_*^{\al\bt}\}$ of groups $C_f^n(N_\al)$ of cochains with integer coefficient group of nerves $N_\al$, where $S_\al\in S$, satisfies the condition of Theorem  \ref{t2}. Therefore, the direct limit  $C_f^n(X) =\rlm(C_f^n(N_\al),\pi_*^{\al\bt})$ is a free group. By {\color{black}Theorem 4.1 \citep[\S III.4]{8},} for the homology group $H_n(\Hom(C_f^*(X),G))$ there exists the Universal Coefficient Formula:

\begin{equation}\label{eq8}
  0 \lra \Ext(H_f^{n+1}(X),G) \lra H_n(\Hom(C_f^*(X),G)) \lra \Hom(H_f^n(X),G) \lra 0.
\end{equation}
Since $\Hom(C_f^*(X),G) \approx C_*^{inf}(X,G)$, by an isomorphism \eqref{eq7} and Theorem \ref{t1}, there exists an isomorphism 
\begin{equation}\label{eq9}
  H_*(\Hom(C_f^*(X),G)) \approx \os{ch}H_*(X,G) \approx \os{{\,}_K}{H}_*(X,G).
\end{equation}
On the other hand, by Theorem 2.1.1 \cite{3} and Theorem 6.9 \cite[\S X.6]{4} we obtain isomorphisms 
\begin{equation}\label{eq10}
  H_f^*(X,G) \approx {H}_{\triangle}^*(X,G) \approx \check{H}^*(\dot{X},*,G),
\end{equation}
where ${H}_{\triangle}^*$ is the Alexandroff homology with proper subcomplexes. Using the exact sequence \eqref{eq8}, by the isomorphisms \eqref{eq9} and \eqref{eq10} we obtain the required statement.  
\end{proof}

\begin{corollary}\label{c3}
An inclusion $\rho:X\to (\dot{X},*)$, where $\dot{X}$ is the one-point Alexandroff compactification of  locally compact, paracompact Hausdorff space $X$, induces an isomorphism 
\begin{equation}\label{eq11}
  \os{{\,}_K}{H}_*(X,G) \approx \os{{\,}_K}{H}_*(\dot{X},*,G).
\end{equation}
\end{corollary}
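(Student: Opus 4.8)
The plan is to follow the pattern of Corollary~\ref{c2}, replacing the cohomological isomorphism \eqref{eq2} by the corresponding statement for the Čech cohomology $\check{H}^*(\dot X,*,G)$ that appears in the universal coefficient formula established just above. Since $\dot X$ is compact Hausdorff it is in particular locally compact and paracompact, so the inclusion $\rho\colon X\to(\dot X,*)$ is a morphism in the category under consideration and induces a homomorphism $\rho_*\colon\os{{\,}_K}{H}_n(X,G)\to\os{{\,}_K}{H}_n(\dot X,*,G)$; the goal is to show it is an isomorphism.

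First I would write down the universal coefficient sequence of the preceding theorem for $X$,
\[
0\lra\Ext(\check{H}^{n+1}(\dot X,*),G)\lra\os{{\,}_K}{H}_n(X,G)\lra\Hom(\check{H}^n(\dot X,*),G)\lra0,
\]
together with its analogue for the compact pair $(\dot X,*)$. Both sequences come from applying the natural short exact sequence \eqref{eq8} to the free cochain complexes $C_f^*(X)$, respectively $C_f^*(\dot X,*)$, and then using the identifications \eqref{eq9} and \eqref{eq10}; in particular the two sequences have the same outer terms. The inclusion $\rho$ induces a morphism between these short exact sequences: on the middle terms it is $\rho_*$, and on the outer terms it is induced by the restriction morphism of cochain complexes $C_f^*(\dot X,*)\to C_f^*(X)$. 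The point is that this restriction is a cohomology isomorphism: a regular partition of $X$ together with the singleton $\{*\}$ is a regular partition of $\dot X$, and conversely each regular partition of $\dot X$ restricts to one of $X$, so the restriction is compatible with the identifications $H_f^*(X)\approx H_{\triangle}^*(X)\approx\check{H}^*(\dot X,*)$ and their relative counterpart, hence is an isomorphism on $H_f^*$ and therefore on the $\Ext$ and $\Hom$ terms.

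Finally, applying the Lemma of Five Homomorphisms to the resulting commutative ladder with exact rows shows that the middle vertical map $\rho_*$ is an isomorphism, which is \eqref{eq11}. The step I expect to be the main obstacle is the previous one: making precise the relative universal coefficient sequence for $(\dot X,*)$ and verifying that the comparison map on its cohomology terms induced by $\rho$ is an isomorphism; once this is in hand the rest is purely formal, exactly as in Corollary~\ref{c2}. (Alternatively, the statement can be deduced directly from Corollary~\ref{c2} as soon as the natural isomorphism between the Kolmogoroff and Massey homology theories is available.)
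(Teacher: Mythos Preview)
The paper gives no explicit proof for this corollary; it is stated as an immediate consequence of the universal coefficient formula just established, in direct parallel to Corollary~\ref{c2}. Your approach---write the universal coefficient sequence for $X$ and the relative one for $(\dot X,*)$, observe that the outer terms agree, and apply the Five Lemma---is exactly this intended argument.

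Two small points deserve care. First, the inclusion $\rho:X\to\dot X$ is \emph{not} a proper map when $X$ is noncompact (the preimage of $\dot X$ is $X$), so your sentence ``$\rho$ is a morphism in the category under consideration'' is not literally correct. The induced homomorphism $\rho_*$ still exists, but it comes from the restriction of cochains $C_f^*(\dot X,*)\to C_f^*(X)$ (or equivalently from the correspondence between regular partitions you describe), not from functoriality along proper maps. Second, the alternative you mention in the last parenthesis---deducing the statement from Corollary~\ref{c2} once $\os{{\,}_K}{H}_*\approx\os{{\,}_M}{H}_*$ is available---would be circular in the paper's logical order: that isomorphism is Corollary~\ref{c5}, and its proof explicitly invokes the present Corollary~\ref{c3}.
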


\begin{corollary}\label{c4}
Since the Kolmogoroff and the Massey homology theories satisfy the condition of uniqueness, in particular the Universal Coefficient Formula \cite[Theorem 4.4]{1},{\color{black} \cite[Theorem 1.5]{BM}}, they are isomorphisms  on the category of compact spaces.
\end{corollary}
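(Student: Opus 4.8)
The plan is to exhibit the Massey and the Kolmogoroff homology theories, on the category of compact Hausdorff spaces, as two instances of \emph{one and the same} universal coefficient short exact sequence, namely the one governed by \v Cech (equivalently Alexander--Spanier) cohomology, and then to quote the uniqueness theorem \cite[Theorem 4.4]{1}, \cite[Theorem 1.5]{BM}: an exact homology theory on that category is determined, up to natural isomorphism, by such a formula together with its value on a one-point space. The crux of the reduction is the identification of Massey's cohomology with compact supports with \v Cech cohomology on compact spaces.

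First I would reconcile the two universal coefficient formulas. If $X$ is a compact Hausdorff space, its one-point Alexandroff compactification $\dot X$ is just $X$ with an added isolated point $*$, so that $\check H^{\,n}(\dot X,*)\approx\check H^{\,n}(X)$ and, via \eqref{eq2}, also $H_c^{\,n}(X)\approx H_c^{\,n}(\dot X,*)$. On a compact Hausdorff space cohomology with compact supports reduces to ordinary Alexander--Spanier cohomology, which agrees with \v Cech cohomology on paracompact Hausdorff spaces; hence $H_c^{\,n}(X)\approx\check H^{\,n}(X)$, naturally in $X$ and compatibly with coefficient homomorphisms. Substituting this into the split sequence \eqref{eq1} gives, for the Massey homology,
$$
0 \lra \Ext(\check H^{\,n+1}(X),G) \lra \os{{\,}_M}{H}_n(X,G) \lra \Hom(\check H^{\,n}(X),G) \lra 0,
$$
while the universal coefficient formula for the Kolmogoroff homology established above gives
$$
0 \lra \Ext(\check H^{\,n+1}(X),G) \lra \os{{\,}_K}{H}_n(X,G) \lra \Hom(\check H^{\,n}(X),G) \lra 0 .
$$
Both theories are exact homology theories on compact Hausdorff spaces --- the Massey homology by \cite{9}, and the Kolmogoroff homology because, by Theorem~\ref{t1} and Theorem~\ref{t2}, it is the homology of $\Hom(C_f^*(X),G)$ with $C_f^*(X)$ a free cochain complex (cf. \cite{3}, \cite{7}) --- and each of the displayed sequences shows that the theory in question is concentrated in degree $0$ with value $G$ on a point. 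Applying \cite[Theorem 4.4]{1}, equivalently \cite[Theorem 1.5]{BM}, to this pair then furnishes a natural isomorphism $\os{{\,}_K}{H}_*(X,G)\approx\os{{\,}_M}{H}_*(X,G)$ on the category of compact Hausdorff spaces.

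The main obstacle is the first step: making the identification $H_c^{\,n}(X)\approx\check H^{\,n}(X)$ genuinely natural in $X$ as well as in the coefficient variable, so that the two universal coefficient sequences above are recognized as two descriptions of a single natural sequence --- here one must be careful, because the splitting in \eqref{eq1} is natural only with respect to coefficient homomorphisms. Once that identification is secured the conclusion is a formal application of the cited uniqueness theorems; it remains only to check that the precise axiom list presupposed in \cite[Theorem 4.4]{1} and \cite[Theorem 1.5]{BM} (exactness, the continuity/strong-excision condition, and the universal coefficient property) is exactly the package satisfied by the Massey and the Kolmogoroff homologies on compact Hausdorff spaces.
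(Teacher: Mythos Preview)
Your proposal is correct and follows the same approach as the paper. The paper gives no separate proof here; the reasoning is embedded in the statement itself --- both theories satisfy the Universal Coefficient Formula with respect to \v Cech cohomology, and the cited uniqueness theorems then yield the isomorphism --- and your write-up is a correct and careful unpacking of exactly that argument, including the identification $H_c^n(X)\approx\check H^n(X)$ on compact Hausdorff spaces and the reduction $\check H^n(\dot X,*)\approx\check H^n(X)$ when $X$ is already compact.
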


\begin{corollary}\label{c5}
By the corollaries $\ref{c2}$ and $\ref{c3}$, the Kolmogoroff and the Massey homologies of locally compact, paracompact Hausdorff spaces are isomorphic the Kolmogoroff and the Massey homologies of compact space, which is the one-point Alexadroff compactification of the given space. Therefore, by corollary  $\ref{c4}$ there is an isomorphism 
$$
\os{{\,}_K}{H}_*(X,G) \approx \os{{\,}_M}{H}_*(X,G)
$$
on the category of locally compact, paracompact {\color{black}Hausdorff} spaces and proper maps.
\end{corollary}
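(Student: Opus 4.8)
The plan is to deduce the statement by reduction to the compact case, along the lines already indicated after the statement: pass to the one-point Alexandroff compactification, compare the two theories there, and transport the comparison back along the natural isomorphisms of Corollaries~\ref{c2} and~\ref{c3}. First I would dispose of the case $X$ compact, where the assertion is literally Corollary~\ref{c4}. So assume $X$ is a noncompact, locally compact, paracompact Hausdorff space, let $\dot X=X\cup\{*\}$ be its one-point compactification, a compact Hausdorff space, and let $\rho:X\to(\dot X,*)$ be the inclusion of pairs. By Corollary~\ref{c2} the homomorphism $\rho_*:\os{{\,}_M}{H}_*(X,G)\os{\sim}{\lra}\os{{\,}_M}{H}_*(\dot X,*,G)$ is an isomorphism, and by Corollary~\ref{c3} so is $\rho_*:\os{{\,}_K}{H}_*(X,G)\os{\sim}{\lra}\os{{\,}_K}{H}_*(\dot X,*,G)$. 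Applying Corollary~\ref{c4} to the compact pair $(\dot X,*)$ --- both theories satisfy the uniqueness conditions, in particular the Universal Coefficient Formula --- gives an isomorphism $\os{{\,}_K}{H}_*(\dot X,*,G)\approx\os{{\,}_M}{H}_*(\dot X,*,G)$, and composing
\[
\os{{\,}_K}{H}_*(X,G)\os{\sim}{\lra}\os{{\,}_K}{H}_*(\dot X,*,G)\approx\os{{\,}_M}{H}_*(\dot X,*,G)\os{\sim}{\lla}\os{{\,}_M}{H}_*(X,G)
\]
yields the claimed isomorphism of graded groups.

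The remaining and only substantive point is to check that this composite is natural for proper maps, so that it is an isomorphism of homology theories on the stated category rather than merely a family of isomorphisms of groups. Here I would use that the one-point compactification is a functor on locally compact Hausdorff spaces and \emph{proper} maps: a proper $f:X\to Y$ extends uniquely to a continuous $\dot f:\dot X\to\dot Y$ with $\dot f(*)=*$, and $\dot f\circ\iota_X=\iota_Y\circ f$, so the inclusions $\iota_X$, $\iota_Y$ --- hence the isomorphisms of Corollaries~\ref{c2} and~\ref{c3} --- are natural, and the inverse of a natural isomorphism is again natural. It then suffices to know that the comparison isomorphism furnished by Corollary~\ref{c4} is itself a natural transformation of homology theories on compact Hausdorff pairs, which is precisely what the uniqueness theorems cited there provide, the comparison being constructed from the Steenrod--Milnor axioms and the coefficient homomorphism. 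Splicing these three natural isomorphisms together gives the desired natural isomorphism on the category of locally compact, paracompact Hausdorff spaces and proper maps.

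I expect the main obstacle to be exactly this naturality bookkeeping: making sure the uniqueness statement of Corollary~\ref{c4} is invoked in its natural form on \emph{pairs}, compatibly with the exact homology sequence and with coefficient homomorphisms, and that the relative groups $\os{{\,}_M}{H}_*(\dot X,*,G)$ and $\os{{\,}_K}{H}_*(\dot X,*,G)$ of Corollaries~\ref{c2} and~\ref{c3} are identified functorially with the reduced homology of $\dot X$, so that Corollary~\ref{c4} genuinely applies to them. Granting that, no further computation is needed.
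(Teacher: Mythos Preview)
Your proposal is correct and follows exactly the approach the paper takes: reduce to the compact pair $(\dot X,*)$ via Corollaries~\ref{c2} and~\ref{c3}, then invoke Corollary~\ref{c4}. In fact you are more careful than the paper, which leaves the naturality for proper maps implicit; your discussion of functoriality of the one-point compactification and of the uniqueness isomorphism on pairs is a genuine addition.
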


\begin{corollary}\label{c6}
		{\rm a)} If $X$ is a locally compact, paracompact Hausdorff space, then for the system $\{N\}$ of closed neighborhoods $N$ of a closed subspace $A$ of  $X$, there is an infinite exact sequence
	\begin{gather*}
	\cdots \lra \llm^{(2k+1)} \os{{\,}_K}{H}_{n+k+1}(N) \lra \cdots \lra \llm^{(3)} \os{{\,}_K}{H}_{n+2}(N) \lra \llm^{(1)} \os{{\,}_K}{H}_{n+1}(N) 
	\lra \os{{\,}_K}{H}_{n}(A,G) \nonumber \lra\\
	\os{i_n}{\lra} \llm\os{{\,}_K}{H}_{n}(N) \lra \llm^{(2)} \os{{\,}_K}{H}_{n+1}(N) \lra \cdots \lra \llm^{(2k)} \os{{\,}_K}{H}_{n+k}(N) \lra \cdots\,,
	\end{gather*}
	where $\os{{\,}_K}{H}_*(N)=\os{{\,}_K}{H}_*(N,G)$ is the Kolmogoroff homology. 
	
	{\rm b)} If  $X$ is a compact Hausdorff space, then for the system $\{N\}$ of closed neighborhoods $N$ of a closed subspace $A$ of $X$, there is an infinite exact sequence
	\begin{gather*}
	\cdots \lra \llm^{(2k+1)} \os{{\,}_{Mi}}{H}_{n+k+1}(N) \lra \cdots \lra \llm^{(1)} \os{{\,}_{Mi}}{H}_{n+1}(N) \lra \os{{\,}_{Mi}}{H}_{n}(A) \\
	\os{i_n}{\lra} \llm\os{{\,}_{Mi}}{H}_{n}(N) \lra \llm^{(2)} \os{{\,}_{Mi}}{H}_{n+1}(N) \lra \cdots \lra \llm^{(2k)} \os{{\,}_{Mi}}{H}_{n+k}(N) \lra \cdots\,,
	\end{gather*}
	where $\os{{\,}_{Mi}}{H}_*(N)=\os{{\,}_{Mi}}{H}_*(N,G)$ is the Milnor homology \cite{10}.
\end{corollary}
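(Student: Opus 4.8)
The plan is to deduce both statements by transporting the long exact sequence of Theorem \ref{thm5} along the isomorphisms established earlier, so that essentially no new homological algebra is needed — only a careful check that the relevant isomorphisms are compatible with the inverse systems $\{N\}$ and with the inclusion $A\hookrightarrow N$.

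For part a), the key observation is that the system $\{N\}$ of closed neighborhoods of $A$ in a locally compact, paracompact Hausdorff space $X$ consists of locally compact, paracompact Hausdorff spaces, and the inclusions $i_{N_1,N_2}\colon N_1\to N_2$ are proper maps. Hence Corollary \ref{c5} applies to each $N$ and to $A$, giving natural isomorphisms $\os{{\,}_K}{H}_*(N,G)\approx\os{{\,}_M}{H}_*(N,G)$ and $\os{{\,}_K}{H}_*(A,G)\approx\os{{\,}_M}{H}_*(A,G)$. First I would remark that the isomorphism of Corollary \ref{c5} is natural with respect to proper maps: it is built, via Corollaries \ref{c2} and \ref{c3}, from the one-point compactification functor together with the Universal Coefficient Formula isomorphisms, all of which are natural. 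Consequently the isomorphisms $\os{{\,}_K}{H}_*(N,G)\approx\os{{\,}_M}{H}_*(N,G)$ commute with the bonding maps $i_{N_1,N_2}$, so they induce isomorphisms on all the derived inverse limits $\llm^{(j)}$, and they are compatible with the map $i_n$ induced by $A\hookrightarrow N$. Applying these isomorphisms termwise to the exact sequence of Theorem \ref{thm5} yields the desired exact sequence with $\os{{\,}_M}{H}$ replaced by $\os{{\,}_K}{H}$.

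For part b), when $X$ is compact Hausdorff every closed neighborhood $N$ of $A$ is a compact Hausdorff space, so by Corollary \ref{c4} (the Uniqueness/Universal-Coefficient comparison on compact spaces) the Milnor homology $\os{{\,}_{Mi}}{H}_*(N,G)$ agrees with the Massey homology $\os{{\,}_M}{H}_*(N,G)$, naturally in continuous maps, hence compatibly with the inverse system $\{N\}$ and with $A\hookrightarrow N$. As in part a), substituting these isomorphisms into the sequence of Theorem \ref{thm5} produces the stated exact sequence for $\os{{\,}_{Mi}}{H}$. The shift in indexing at the left end (the sequence starting directly with $\llm^{(1)}\os{{\,}_{Mi}}{H}_{n+1}(N)\to\os{{\,}_{Mi}}{H}_n(A)$, with no $\llm^{(3)}$ term displayed before it) is only a matter of how the tail is written and follows the same pattern as in Theorem \ref{thm5}.

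The main obstacle I anticipate is not the algebra but the verification that the comparison isomorphism of Corollary \ref{c5} is genuinely natural for proper maps of locally compact paracompact spaces — in particular that it is compatible with restriction along the inclusions $i_{N_1,N_2}$ and with $A\hookrightarrow N$ — since this naturality is what allows passage to $\llm$ and to all higher derived limits $\llm^{(j)}$ simultaneously. Once that naturality is in hand, the proof is a formal termwise transport of the exact sequence of Theorem \ref{thm5}; I would spell out the naturality argument carefully and leave the substitution itself as routine.
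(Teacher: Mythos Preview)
Your proposal is correct and follows the same route the paper takes: the paper states Corollary~\ref{c6} without a separate proof, treating it as an immediate consequence of Theorem~\ref{thm5} together with the comparison isomorphisms of Corollaries~\ref{c4} and~\ref{c5}, which is exactly the termwise transport you describe. Your explicit attention to the naturality of the comparison isomorphism (needed to pass to all $\llm^{(j)}$) is a welcome addition that the paper leaves implicit; note only that for part~b) the identification of Milnor homology with Massey homology on compact Hausdorff spaces relies on the uniqueness theorem of Berikashvili~\cite{1} (cf.\ \cite{BM}) rather than Corollary~\ref{c4} literally, which concerns Kolmogoroff versus Massey.
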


As it is known \cite{6n}, for each countable inverse system  $\{A_k\}$ of abelian groups $A_k$ there is  $\llm^{(i)} \{A_k\}=0$ for $i\geq 2$. By this fact and Theorem \ref{thm5} and Corollary \ref{c6}, we have

\begin{corollary}\label{c7}
	{\rm a)} If $X$ is a locally compact {\color{black}Hausdorff} space with second countable axiom, then for each countable system $\{N_i\}$ of closed neighborhoods of a closed subspace  $A$ of  $X$ there is a short exact sequence 
	$$
	0 \lra \llm^{(1)} \os{{\,}_M}{H}_{n+1} (N_i) \lra \os{{\,}_M}{H}_n(A,G) \lra \llm \os{{\,}_M}{H}_n(N_i) \lra 0,
	$$
	where $\os{{\,}_M}{H}_*$ is the Massey homology \cite{9}.
	
	{\rm b)} If $X$ is a locally compact, paracompact Hausdorff space with second countable axiom, then for each countable system $\{N_i\}$ of closed neighborhoods of a closed subspace $A$ of $X$ there is a short exact sequence 
	$$
	0 \lra \llm^{(1)} \os{{\,}_K}{H}_{n+1} (N_i) \lra \os{{\,}_K}{H}_n(A,G) \lra \llm \os{{\,}_K}{H}_n(N_i) \lra 0,
	$$
	where $\os{{\,}_K}{H}_*$ is the Kolmogoroff homology \cite{7}.
	
	{\rm c)} If $X$ is a compact Hausdorff space with second countable axiom, then for each countable system $\{N_i\}$ of closed neighborhoods of a closed subspace $A$ of $X$ there is a short exact sequence
	$$
	0 \lra \llm^{(1)} \os{{\,}_{Mi}}{H}_{n+1} (N_i) \lra \os{{\,}_{Mi}}{H}_n(A,G) \lra \llm \os{{\,}_{Mi}}{H}_n(N_i) \lra 0,
	$$
	where $\os{{\,}_{Mi}}{H}_*$ is the Milnor homology \cite{10}.
	
	{\rm d)} If $X$ is a compact metric space, then for each countable system $\{N_i\}$ of a closed neighborhoods  of closed subspace $A$ of $X$ there is a short exact sequence
	$$
	0 \lra \llm^{(1)} \os{st}{H}_{n+1} (N_i) \lra \os{st}{H}_n(A,G) \lra \llm \os{st}{H}_n(N_i) \lra 0,
	$$
	where $\os{st}{H}_*$ is the Steenrod  homology \cite{12}.
\end{corollary}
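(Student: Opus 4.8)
The plan is to obtain all four short exact sequences by collapsing the corresponding infinite exact sequences already at our disposal: Theorem~\ref{thm5} for the Massey homology, and Corollary~\ref{c6} for the Kolmogoroff and the Milnor homologies. The only extra input is the classical fact, recalled from \cite{6n}, that for a countable inverse system $\{A_k\}$ of abelian groups one has $\llm^{(j)}\{A_k\}=0$ whenever $j\ge 2$. So the first step is, under each of the stated countability hypotheses, to fix a \emph{countable} fundamental system $\{N_i\}$ of closed neighborhoods of $A$ (a countable neighborhood basis) and to check that Theorem~\ref{thm5} and Corollary~\ref{c6} still hold when the family of \emph{all} closed neighborhoods is replaced by such an $\{N_i\}$. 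For this I would point out that the only place the full neighborhood family is used in those proofs is the tautness isomorphism \eqref{eq12}, and that passing to a cofinal subsystem leaves a direct limit unchanged; hence \eqref{eq14}, and with it the whole argument of Theorem~\ref{thm5} (and of Corollary~\ref{c6}), carries over to $\{N_i\}$ verbatim, producing the infinite exact sequence with every $\os{{\,}_M}{H}_*(N)$ replaced by $\os{{\,}_M}{H}_*(N_i)$.

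Granting this, part (a) is immediate: for each fixed $m$ the inverse system $\{\os{{\,}_M}{H}_m(N_i)\}_i$ is indexed by a countable directed set, so $\llm^{(j)}\os{{\,}_M}{H}_m(N_i)=0$ for all $j\ge 2$ by \cite{6n}. Every term of the infinite sequence other than $\llm^{(1)}\os{{\,}_M}{H}_{n+1}(N_i)$, $\os{{\,}_M}{H}_n(A,G)$ and $\llm\os{{\,}_M}{H}_n(N_i)$ then vanishes, and exactness leaves precisely the asserted short exact sequence $0\to\llm^{(1)}\os{{\,}_M}{H}_{n+1}(N_i)\to\os{{\,}_M}{H}_n(A,G)\to\llm\os{{\,}_M}{H}_n(N_i)\to 0$. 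Parts (b) and (c) are handled by the same collapse, starting instead from the Kolmogoroff exact sequence of Corollary~\ref{c6}(a) --- where the paracompactness hypothesis enters only through Corollary~\ref{c5}, which is what makes that sequence available --- and from the Milnor exact sequence of Corollary~\ref{c6}(b) on compact Hausdorff spaces. For (d) the plan is to reduce to (c): a compact metric space is compact Hausdorff and second countable, and on the category of compact metric spaces the Steenrod homology $\os{st}{H}_*$ is naturally isomorphic to the Milnor homology $\os{{\,}_{Mi}}{H}_*$, so the sequence of (c) transports to the Steenrod homology.

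The step I expect to require the most care is not the collapse itself --- which is formal once \cite{6n} is invoked --- but the verification in the first step that the chosen countable system is legitimate, namely that it can be taken to be a directed fundamental system of closed neighborhoods (so that the $\llm^{(j)}$ occurring in Theorem~\ref{thm5} and in \cite{6n} are the same construction) and that the tautness isomorphism genuinely survives the restriction to it. Once those bookkeeping points are in place, all four statements follow uniformly.
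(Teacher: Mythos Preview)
Your proposal is correct and follows the same route as the paper: the paper's entire argument for this corollary is the single sentence preceding it, invoking the vanishing of $\llm^{(j)}$ for $j\ge 2$ on countable inverse systems (from \cite{6n}) together with Theorem~\ref{thm5} and Corollary~\ref{c6} to collapse the long exact sequences. Your additional care about cofinality of the countable neighborhood system and the reduction of (d) to (c) via the Steenrod--Milnor isomorphism fills in details the paper leaves implicit, but the strategy is identical.
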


\bibliographystyle{elsarticle-num}

\end{document}